\documentclass[10pt,draftcls,onecolumn]{IEEEtran}
\usepackage{amsmath,amssymb,graphicx,epsfig,adjustbox,color,subfigure}
\usepackage{algpseudocode}

\newcommand{\bs}[1]{\boldsymbol{#1}}
\newcommand{\mc}[1]{\mathcal{#1}}
\newcommand{\tr}{^{\text{{\tiny $T$}}}}
\newcommand{\field}[1]{{\mathbb{#1}}}
\newcommand{\be}{\begin{equation}}
\newcommand{\ee}{\end{equation}}
\newcommand{\bea}{\begin{eqnarray}}
\newcommand{\eea}{\end{eqnarray}}
\newcommand{\ba}{\begin{array}}
\newcommand{\ea}{\end{array}}
\newcommand{\beas}{\begin{eqnarray*}}
\newcommand{\eeas}{\end{eqnarray*}}
\newcommand{\leftm}{\left[\begin{array}}
\newcommand{\rightm}{\end{array}\right]}

\newtheorem{thm}{\bf Theorem}[section]

\newtheorem{prop}{\it Proposition}

\newtheorem{lem}[thm]{\bf Lemma}
\newtheorem{definition}{Definition}[section]

\DeclareMathOperator*{\argmin}{arg\,min}
\title{Energy Aware Architecture for Coordinated Mobility: An Approximate Dynamic Programming Approach}
\author{Hassan Jaleel and Jeff S. Shamma\thanks{H. Jaleel and J.S. Shamma are with the King Abdullah University of Science and Technology (KAUST), Computer, Electrical and Mathematical Sciences and Engineering Division (CEMSE), Thuwal 23955--6900, Saudi Arabia. Email: hassan.jaleel@kaust.edu.sa, jeff.shamma@kaust.edu.sa. Shamma is also with the Georgia Institute of Technology, School of Electrical and Computer Engineering. Research supported by funding from
KAUST.}}
\begin{document}
\maketitle
\begin{abstract}
Our goal is to design distributed coordination strategies that enable agents to achieve global performance
guarantees while minimizing the energy cost of their actions with an emphasis on feasibility for real-time implementation. As a motivating scenario that illustrates the importance of introducing energy awareness at the agent level, we consider a team of mobile nodes that are assigned the task of establishing a communication link between two base stations with minimum energy consumption. We formulate this problem as a dynamic program in which the total cost of each agent is the sum of both mobility and communication costs. To ensure that the solution is distributed and real time implementable, we propose multiple suboptimal policies based on the concepts of approximate dynamic programming. To provide performance guarantees, we compute upper bounds on the performance gap between the proposed suboptimal policies and the global optimal policy. Finally,  we discuss merits and demerits of the proposed policies and compare their performance using simulations. 

\end{abstract}
\section {Introduction} \label{sec:Intro}
The goal of this paper is to present a framework for multiagent systems that will allow individual agents to strike a balance between global performance and the cost of distributedly computing the globally optimal control action. This computation cost typically comprises  the cost of communication among neighboring nodes. In the proposed framework, the focus is on designing energy aware local interaction laws for individual agents that can provide performance guarantees, are implementable in real time, and have limited communication and computation overhead. The framework is presented in the context of a motivating example in which a collection of relay nodes is deployed to establish a communication link between two base stations for a long period of time. The task is to find an energy efficient and distributed mobility strategy to move the relay nodes to optimal locations such that the total energy consumption is minimized. We will formulate this problem as a dynamic program in which the cost to be minimized is the sum of the communication and the mobility costs of all the agents. To ensure that the control policy is distributed, energy efficient, and real time implementable, we will propose multiple suboptimal policies and compare their performance with the global optimal policy based on the techniques of approximate dynamic programming \cite{Bertsekas}.  

A fundamental challenge in designing an energy aware scheme for multiagent systems is that each agent only has limited information of the system but its actions have direct impact on the global performance (see \cite{Marden} and the references therein). Another challenge is to ensure that the proposed scheme is feasible for real-time implementation. This challenge gets more complicated when the objective is to find an optimal trajectory over a given interval because each agent must account for all the possible future trajectories of its neighbors while computing its current control actions, which makes real time implementation impractical even for simple scenarios. Consequently, the control strategies either become too complex and/or require excessive communication among the agents, resulting in large energy consumption.  Therefore, an efficient energy aware coordination strategy must have the capability to strike a balance between the performance requirements of the system and the energy requirements of the control strategies that it implements to achieve this performance. 


The importance of introducing energy awareness in multiagent systems like wireless sensor networks is widely recognized  
(see \cite{Anastasi} and the references therein). However, when it comes to distributed energy aware mobility strategies, the existing literature is somewhat limited and there are still a lot of unanswered questions. 
In \cite{Zhu}, a distributed energy aware coverage scheme was proposed but it only considered the tradeoff between sensing and processing and did not assign any cost to mobility. In \cite{Goldenberg}, synchronous and asynchronous distributed algorithms were presented for steering relay nodes to the optimal locations for establishing a communication link between two base stations. However, no cost was assigned to mobility because of the assumption that either the batteries can be recharged or communication is for long time and mobility cost is negligible in comparison with communication cost. In \cite{Moukaddem}, a similar problem was considered under a static setting in which agents initially determine and move to their optimal locations and then the communication starts. The proposed solutions were based on heuristics and no optimality guarantees were provided. In \cite{Jaleel} and \cite{Ali} both mobility and communication costs were considered and the problem was formulated as an optimal control problem in a dynamic setting in which agents move and communicate at the same time. In both the references, only a centralized setup was considered.  
%
 %
References \cite{Trecate, Muller, Li, Dunbar, Kazeroonio}, and \cite{Bauso} investigated optimal consensus problems using model predictive control (MPC). In all of these works, different approximate solutions were proposed with guaranteed asymptotic convergence to the consensus set, but none of these works analyzed the energy consumption profile of the proposed solutions. 

In this work, we will start by formulating the problem under consideration as an infinite horizon discounted LQR problem. Using the principal of optimality, we will formulate an equivalent one stage lookahead problem with optimal cost to go as terminal cost. This terminal cost will be a quadratic function of a positive definite matrix $K$ such that $K_{ij} \neq 0$ will indicate that the terminal cost of node $i$ depends on node $j$. We will show that all the entries of $K$ are non-zero which implies that the terminal cost of each node depends on the states of all the nodes. For a system with fixed communication network, the matrix $K$ can be computed offline before the system is deployed. Therefore, the original dynamic program will be reduced to a parameter optimization problem over a network of agents for which distributed optimization algorithms exist that can guarantee global optimial solution (see e.g., \cite{Raffard} and \cite{Nedic}). 

In \cite{Raffard}, a dual decomposition based distributed optimization algorithm was presented in which dual variables were introduced to decouple the cost of each agent. However, each agent needs to communicate with all the agents whose state directly influence its cost. Since all the entries of $K$ are non-zeros, the communication network will be a complete graph and the problem will become centralized. In \cite{Nedic}, a consensus based algorithm was presented in which only neighboring nodes of a graph were required to communicate as long as the graph was connected. However, the size of the message that agents have to communicate with their neighbors directly depends on the number of agents which can be large and can result in significant energy consumption. 

\emph{In the proposed framework, we will impose the constraints of the communication network on the optimal cost to go function and will propose approximate cost to go functions such that the cost of each agent will only depend on itself and its neighboring nodes}. This will significantly reduce the size of the communication message since the number of neighboring nodes is typically small as compared to the total nodes in the network. The price of approximating the optimal cost to go will be a loss in the global performance. We will analyze this performance loss and provide upper bounds on the performance gap between the actual and optimal performances.


The remainder of this paper is organized as follows. Section \ref{sec:prob_description} motivates the problem under investigation and presents a mathematical formulation.  Section \ref{sec:Energy-aware architectures} presents the proposed schemes along with their performance bounds, which are the main results of this paper. Section \ref{sec:simulation} provides performance comparisons of the proposed schemes based on simulations. Finally, Section \ref{sec:} concludes the paper.

\section{Problem Description}\label{sec:prob_description}
\subsection{Notation}
We denote a graph by $G = (V,E)$  where $V = \{1,2,\ldots,N\}$ is the set of vertices and $E = \{(i,j)~ | ~ i,j \in $V$\}$ is the set of edges. Graph $G$ is undirected if its links are bidirectional ($(i,j)$ $\in$ $E$ iff $(j,i)$ $\in$ $E$). The neighborhood set of node $i$ is $$\mc{N}_i = \{j \in V ~|~ (i,j) \in E\},$$ and the cardinality of this set is $\vert \mc{N}_i \vert$. A graph is connected if given any pair of nodes ($i$,$j$), either $(i,j) \in E$ or there exist some intermediate nodes such that $\{(i,i_1), (i_1,i_2),\ldots,(i_m,i_j)\}\in E$. 
The degree matrix of $G$ is a diagonal matrix $\mc{D}(G)$ with the diagonal entries $\mc{D}_{ii}(G) = |\mc{N}_i|$. The adjacency matrix $\mc{A}(G)$ is
\[\mc{A}_{ij} (G)= \left\{
  \begin{array}{ll}
    1 &:  j \in \mc{N}_i\\
    0 &:  \text{otherwise}
  \end{array}
\right.
\]
The graph laplacian, $$\mc{L}(G)  = \mc{D}(G) - \mc{A}(G),$$ is a symmetric and positive semidefinite matrix with real and non-negative eigenvalues for undirected graphs. Moreover, for a connected graph, $0 = \lambda_1<\lambda_2\leq \lambda_3\leq \ldots \leq \lambda_N$.

Let $z_i(k)$ = [$x_i(k)$ $y_i(k)$]$\tr$ denotes the location of node $i$ at time $k$ in $\field{R}^2$, where $[\cdot]\tr$ is the transpose of a vector. For concise notation, the locations of all the nodes at time $k$ are stacked in vector ${\bf z}_k\in \field{R}^{2N}$, i.e., ${\bf z}_k$ = [$z_1(k)\tr$ $\ldots$ $z_N(k)\tr$]$\tr$. For $i\in \{1,2,\ldots,N\}$, ${\bf z}_{-i,k}$ denotes the vector of locations of all neighbors of $i$, i.e., ${\bf z}_{-i,k} = [z_{i_1}(k)\tr ~ z_{i_2}(k)\tr~ \ldots ~z_{i_{|\mc{N}_i|}}(k)\tr]\tr$ where $i_1, \ldots, i_{|\mc{N}_i|} \in \mc{N}_i$.  For $z \in \field{R}^2$, $\Vert z \Vert$ denotes the Euclidean norm.

For a matrix $A $ of dimensions $N \times M$, $\Vert A \Vert_F$ is the Frobenius norm, i.e., 
$$\Vert A \Vert_F = \sqrt{\sum\limits_{i=1}^N \sum\limits_{j=1}^M |A_{ij}|^2}.$$  If $A \in \field{R}^{N \times N}$ and $\lambda_1\geq \lambda_2 \geq \ldots \lambda_N$ are its $N$ eigenvalues in non-increasing order, then $\lambda_i(A) = \lambda_i$ which implies that $\lambda_i$ will be used both as a function and as a constant value. The vector $\bs{ 1}_n$ denotes the vector $[1~ 1 \ldots1]\tr \in \field{R}^n$, $I_n \in \field{R}^{n\times n}$ denotes identity matrix, and $0_{n\times m}$ denotes a matrix of dimension $n \times m$ with all entries equal to 0. Boldface letters like ${\bf z}_k$ denote collections of vectors, and their subscript represents time. 

\subsection{Definitions}
Let $A$ be an $N\times N$ matrix. Then the spectrum of $A$ is the set of all the eigenvalues of $A$ and is denoted as $\text{spec}(A)$. The spectral radius of $A$ is $\rho(A) = \max \{|\lambda_1|,|\lambda_2|, \ldots,|\lambda_N|\}$. 
\definition{A matrix $D$ is a positive matrix ($D > 0$) if all of its entries are positive. It is a non-negative matrix ($D \geq 0$) if all of its entries are non-negative. }
 
\definition{ An $N \times N$ matrix $Z$ is an $M$ matrix if it can be written as 
\begin{equation*}
Z = s I_N - D, ~~~ s>0,~ D\geq 0,~ \rho(D) \leq s,
\end{equation*}
}
If $\rho(D) < s$ then $Z$ is a non-singular $M$ matrix and if $\rho(D) = s$ then $Z$ is a singular $M$ matrix. An essential property of an $M$ matrix is that $Z_{ii} > 0$ and $Z_{ij} \leq 0$ for all $i$ and $j$. Moreover, the inverse of an $M$ matrix is a strictly positive matrix, i.e., all entries are positive \cite{Johnson}.

\emph{Gershgorin circle theorem:} If $A$ is an $N\times N$ matrix, then every eigenvalue of $A$ lies in at least one of the circles $C_1, C_2, \ldots, C_N$ where $C_i$ has its center at the diagonal entry $c_i(A) = A_{ii}$ and its radius $r_i(A) = \sum\limits_{j\neq i} |A_{ij}|$.\\
The Gershgorin circle theorem provides upper and lower bounds for the spectrum of a square matrix. For a laplacian matrix $\mc{L}$, $c_i(\mc{L}) = r_i(\mc{L})$ since each diagonal entry is equal to the sum of the absolute values of the off diagonal entries of that row. 

\subsection{System Setup}

\begin{figure}[t]
\centering
\includegraphics[trim= 0cm 0cm 0cm 0cm,clip, scale=0.65]{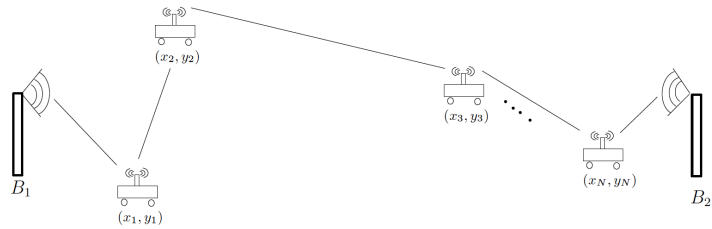}
\caption{Illustration of the communication network. $B_1$ and $B_2$ are the base stations and $N$ relay nodes are located at $(x_i,y_i)$ for all $i \in \{1,2,\ldots,N\}$.}
\label{fig:pic}
\end{figure}
  
Consider two base stations $\mc{B}_1$ and $\mc{B}_2$ separated by a distance $d$. Without loss of generality we can assume that the base station $\mc{B}_1$ is located at the origin. The objective is to establish an uninterrupted communication link between these base stations for a time interval of length $T$ with minimum energy consumption. From \cite{Goldenberg}, the power required to successfully transmit over a distance $d$ at maximum data rate is directly proportional to the square of the transmission distance $d$, i.e.,  
\begin{equation*}\label{eq:P_comm}
\text{P}_\text{comm} = (\kappa_c d^2 +b),
\end{equation*}
where $\kappa_c$ is a proportionality constant and $b$ reflects the additional power consumed by transmitter/receiver circuitry. In this work we are interested in communication power only, so assume $b = 0$. 

The energy required to establish the communication link between the two base stations will be extremely high for large values of $d$ and $T$. One solution is to use $N$  relay nodes  as shown in Fig. \ref{fig:pic}. Depending on the value of $N$ and the deployment locations of the nodes, the overall energy consumption can be significantly reduced. For minimum energy consumption, the relay nodes must be evenly spaced on the straight line between the two base stations \cite{Goldenberg}. However, we are interested in a scenario in which the nodes are randomly deployed between the base stations and they need to estimate and move to the optimal deployment locations in a decentralized manner while communicating. This redeployment towards the optimal locations will have its own cost that must be accounted for. 

Let $z_i(k)$ denotes the location of relay node $i$ at time $k$ for $i\in \{1,2,\ldots,N\}$, and $z_{N+1}(k)$ and $z_{N+2}(k)$ denote the locations of the base stations $\mc{B}_1$ and $\mc{B}_2$ respectively. Because the base stations remain stationary, $z_{N+1}(k)$ and $z_{N+2}(k)$ are constant values.  This network of the relay nodes and the base stations is represented by a graph $G_c = (V,E)$ in which the vertex set $V$ consists of $N$ relay nodes $\{1,2,\ldots,N\}$ and two base stations $\{N+1,N+2\}$. An edge exists between two vertices in $G_c$ if the corresponding nodes can communicate with each other, i.e., $\mc{N}_i$ is the set of nodes with which node $i$ can communicate. This communication network can be represented algebraically by graph laplacian $\mc{L} = \mc{L}(G_c)$ where
\[\mc{L}_{ij} = \left\{
  \begin{array}{ll}
    -1 &: i \neq j ~ \text{and} ~j \in \mc{N}_i\\
    |\mc{N}_i| &:  i=j\\
    0 &: \text{otherwise}
  \end{array}
\right.
\]

 To find the optimal locations of the relay nodes, we assume that the system satisfies the following properties.
 
\vspace{0.2in}
\emph{Assumptions:} 
\begin{enumerate}
\item The communication network is a line graph, and it remains fixed, i.e., $\mc{L}\in \field{R}^{(N+2)\times (N+2)}$ and has the following structure:
$$\mc{L} = \begin{pmatrix}
 2 & -1&0 & \cdots &\cdots& -1 & 0 \\
 -1 &2&-1&\cdots &\cdots& 0&0\\
 0&-1 &2&-1&\cdots&0&0\\
  \vdots&\vdots & \ddots  & \ddots &\ddots &  \vdots&\vdots \\
  0&0&-1&2&-1&0&0\\
0&0&0&-1&2&0&-1\\ 
-1&0&0&0&0&1&0\\ 
0&0&0&0&-1&0&1\\ 
 \end{pmatrix}$$
\item The relay nodes are mobile nodes with single integrator dynamics
\begin{equation*} 
{z}_i(k+1) = z_i(k)+u_i(k),
\end{equation*}
where $u_i (k) \in \field{R}^2$ is the input at time $k$.\vspace{0.05in} 
\end{enumerate}
Although the communication link is established initially, the overall energy consumption can be minimized if the relay nodes can somehow move to the optimal locations in an efficient manner. However, there is no leader or centralized authority that has the knowledge of the optimal locations, so the nodes need to figure it out locally based on communication with their neighbors. To ensure that the overall energy consumption is minimized, it is imperative to incorporate the cost of mobility in the system model because mobility is orders of magnitude more expensive than communication. From \cite{Mei}, mobility power consumption can be approximated to be a function of speed, i.e., 
\begin{equation*}
\text{P}_\text{mob} =  \psi( \Vert u(k) \Vert),
\end{equation*}  
where $u(k)$ is the robot's velocity at time $k$. We assume that mobility cost of a node is proportional to the square of its speed, i.e., $\psi( \Vert u(k)\Vert) = \kappa_M \Vert u(k)\Vert^2$, where $\kappa_M$ is a mobility constant. This choice of mobility model is valid for mobile nodes that use DC motors and operate at low speeds. The advantage of this model is that it will help in obtaining an analytical solution to the optimization problem.

In this paper, we will formulate this problem as a dynamic program and propose suboptimal policies that can be implemented in real time and have low energy overhead. We will also derive upper bounds on the performance gap between the proposed policies and the global optimal policy and compare their performance through simulations. 
\subsection{Problem Formulation} 
The total cost that is to be minimized is the sum of the mobility cost and the communication cost of the system. We will formulate the problem as an infinite horizon problem with a discount factor $\alpha \in (0,1)$.  \\  \\
\emph{Problem}:
\begin{align} \label{eq:Problem_infinite}
 \lim_{T \to \infty}     \min_{\{{\bf u}_0,{\bf u}_1,\ldots,{\bf u}_{T-1} \}} & \sum_{k = 0}^{T-1} \alpha^k g({\bf z}_k,{\bf u}_k)+ \alpha^T J_T({\bf z}_T)  \nonumber  \\
 \text{s.t.} ~~{\bf z}_{k+1}  &= {\bf z}_k + B{\bf u}_k, \tag{$\mc{P}$} 
\end{align}
where
\begin{equation*}
g({\bf z}_k,{\bf u}_k ) = \text{J}_{\text{comm}}({\bf z}(k))  + \text{J}_{\text{mob}}({\bf u}_k) 
\end{equation*}
 \begin{align*}
 \text{J}_{\text{comm}}({\bf z}_k) &=\frac{1}{2}  \left(\sum_{i = 1}^N  \sum_{j \in \mc{N}_i}\kappa_C \Vert z_i(k) - z_j(k) \Vert^2\right),\\
 \text{J}_{\text{mob}}({\bf u}_k)  &= \sum_{i = 1}^N  \kappa_M \Vert  u_i(k) \Vert^2,\\
 J_T({\bf z}_T) &=\frac{1}{2}  \left(\sum_{i = 1}^N  \sum_{j \in \mc{N}_i} \kappa_C \Vert z_i(T) - z_j(T) \Vert^2\right).\\
 \end{align*}
In the system dynamics, 
$$B = \left[  \begin{array}{c}
I_{2N} \\
\hline
0_{4\times 2N}  
\end{array} \right],
$$
where $0_{4\times 2N}$ corresponds to the base stations that are stationary. The cost can be represented compactly in matrix vector notation as follows:
 \begin{align}\label{eq:cost_stage_terminal}
g({\bf z}_k,{\bf u}_k) &= {\bf z}_k\tr Q {\bf z}_k + {\bf u}_k\tr R{\bf u}_k, \nonumber\\
J_T({\bf z}_T) &= {\bf  z}_T\tr Q_T {\bf z}_T.
\end{align}
Here, $R = \kappa_M I_{2N}$ is a diagonal matrix, and $Q = \kappa_C\left(\mc{L}\otimes I_2 \right)$ is a symmetric and positive semidefinite matrix of dimensions $2(N+2)\times 2(N+2)$. With linear dynamics and quadratic cost, $\mc{P}$ is an LQR problem where $u_i(k) = \mu_{i,k}({\bf z}_k)$ is the control value of agent $i$ located at $z_i(k)$ at time $k$, $\mu_{i,k}:\field{R}^2 \rightarrow \field{R}^2$ is the policy of agent $i$ at time $k$, and $\bs{\mu}_k = [\mu_{1,k}\tr~ \mu_{2,k}\tr \ldots \mu_{N,k}\tr]\tr$ is a policy vector at time $k$.

Problem $\mc{P}$ is a standard infinite horizon dynamic program with $\bs{\mu}_k$ specific stage cost $g({\bf z}_k,\bs{ \mu}_k({\bf z}_k))$ and terminal cost $J_T({\bf z}_T)$. Typically, dynamic programs suffer from the curse of dimensionality and become computationally intractable even for small size problems. However, $\mc{P}$ is an LQR problem with closed form analytical solution that involves solving the following difference Riccati equation iteratively:  
\begin{align*}\label{eq:Riccati_finite}
K_T &= Q_T \nonumber \\
K_k &= (\alpha K_{k+1} - \alpha^2 K_{k+1}B(\alpha B\tr K_{k+1} B + R)^{-1} B\tr K_{k+1}) + Q,
\end{align*}
where $K_k$ is symmetric and positive semidefinite for all $k$. The optimal cost to go at stage $k$ is 
\begin{equation*}
J^*_k({\bf z}) = {\bf z}\tr K_{k} {\bf z}.
\end{equation*}
The advantage of formulating $\mc{P}$ as an infinite horizon problem is that $K_k \to K$ as $T  \to \infty$: 
\begin{equation}\label{eq:Riccati_infinite}
K = \alpha K -  \alpha^2 KB( \alpha B\tr K B + R)^{-1} B\tr K + Q.
\end{equation}
where $K$ is a symmetric and positive definite matrix. 
Because of the stationarity of $K$, the optimal cost is also stationary, i.e., $J^*_k ({\bf z}) \to J^* ({\bf z}) $ as $k \to \infty$, where
\begin{equation}\label{eq:cost_optimal}
J^* ({\bf z}) = {\bf z}\tr K {\bf z}.
\end{equation}

Using the principal of optimality, $\mc{P}$ can be formulated as a one stage lookahead problem with stationary optimal cost to go as its terminal cost, i.e., 
\begin{equation}\label{eq:OneStepOptimal}
\bs{\mu}^*({\bf z}_k) = \argmin_{{\bf u}} \left[g({\bf z}_k,{\bf u}) + \alpha J^*({\bf z}_k + B{\bf u})\right]
\end{equation}
subject to the dynamics and constraints in \ref{eq:Problem_infinite}. The optimal stationary policy $\bs{\mu}^*$ has the form
\begin{equation}\label{eq:optimal_control}
\begin{split}
\bs{\mu}^*({\bf z}_k) &= -L {\bf z}_k, \\
L &= \alpha(\alpha B\tr KB + R)^{-1}B\tr K.
\end{split}
\end{equation}
The optimal policy is a simple static feedback law that can be implemented easily under normal circumstances. However, we will show that the optimal policy is centralized which is an intuitive result and makes this problem challenging since the optimal solution requires complete knowledge of the network and cannot be decentralized. 

Next we will show that $K$ is a non-singular $M$-matrix and all of its entries are non-zero, which means each agent requires state information of all the other agents to compute the optimal cost to go. 
\begin{prop}\label{prop:K_Matrix}
\emph{If $Q \in \field{R}^{2(N+2)\times 2(N+2)}$ is a symmetric $M$ matrix}, 
$B = \left[  \begin{array}{c}
I_{2N} \\
\hline
0_{4 \times 2N}  
\end{array} \right]
$, and 
$R = \kappa_M I_{2N}$, 
\emph{then the positive definite solution of the Riccati equation}
\begin{equation*}
K =  \alpha K -  \alpha^2 KB( \alpha B\tr K B + R)^{-1} B\tr K + Q
\end{equation*}
\emph{is also an $M$ matrix. Moreover, if $Q$ is a laplacian matrix of a connected graph then all the entries of $K$ are non-zero. }
\end{prop}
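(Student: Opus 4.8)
The plan is to analyze the Riccati equation \eqref{eq:Riccati_infinite} as a fixed point of the Riccati operator and exploit the structure that $B$ projects onto the relay-node coordinates while $R$ is a positive scalar multiple of the identity. First I would rewrite the correction term in a more convenient form. Write $K$ in the block form induced by the partition into the $2N$ relay coordinates and the $4$ base-station coordinates, so that $B^T K B = K_{11}$, the leading $2N\times 2N$ block of $K$. Then the Riccati recursion, viewed as an iteration $K^{(0)}=Q$, $K^{(t+1)} = \Phi(K^{(t)})$ with $\Phi(K) = \alpha K - \alpha^2 KB(\alpha B^T K B + R)^{-1}B^T K + Q$, converges to the positive definite solution $K$. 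The strategy is to show that the class of symmetric $M$-matrices (equivalently, symmetric matrices with nonpositive off-diagonal entries and nonnegative — in fact here positive definite — so positive — spectrum) is \emph{invariant} under $\Phi$, and then pass to the limit, since the set of symmetric matrices with nonpositive off-diagonal entries is closed.

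The key step is therefore the invariance claim: if $M$ is a symmetric $M$-matrix (with the block structure above), then so is $\Phi(M)$. Adding $Q$ preserves the property since $Q$ is itself a symmetric $M$-matrix and the sum of two matrices with nonpositive off-diagonals has nonpositive off-diagonals. The diagonal positivity and the $M$-matrix spectral condition can be tracked via $\Phi(M) = \alpha M\bigl(I - \alpha B(\alpha B^T M B + R)^{-1}B^T M\bigr) + Q$ together with the identity $M - \alpha M B(\alpha B^T M B + R)^{-1}B^T M = \bigl(M^{-1} + \tfrac{\alpha}{\kappa_M} BB^T\bigr)^{-1}$ (a Woodbury/matrix-inversion-lemma manipulation, using $R=\kappa_M I_{2N}$). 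So the crux reduces to: \emph{the inverse of $M^{-1} + cBB^T$ is again a symmetric $M$-matrix when $M$ is and $c = \alpha/\kappa_M \ge 0$.} Equivalently, $M^{-1}$ is a symmetric positive matrix (inverse of an $M$-matrix), $BB^T$ is a nonnegative diagonal matrix (identity on the relay block, zero elsewhere), so $M^{-1} + cBB^T$ is a positive matrix; its inverse being an $M$-matrix is exactly the characterization of inverse-positive matrices — a symmetric positive definite matrix whose inverse is (entrywise) positive has its inverse's inverse an $M$-matrix. This last equivalence is a standard fact about $M$-matrices and their inverses (the reference \cite{Johnson} is already cited for the inverse-positivity direction), so I would invoke it rather than reprove it. I expect the bookkeeping to confirm that $\Phi(M)$ has positive diagonal and nonpositive off-diagonal with spectrum strictly positive, hence is a nonsingular $M$-matrix.

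For the second assertion — that when $Q$ is the Laplacian of a connected graph \emph{all} entries of $K$ are nonzero — I would argue by irreducibility. Since $K$ is a nonsingular $M$-matrix, write $K = sI - D$ with $D \ge 0$ and $\rho(D) < s$; then $K^{-1} = \tfrac{1}{s}\sum_{m\ge 0}(D/s)^m \ge 0$, and $K^{-1}$ is \emph{strictly} positive precisely when $D$ (equivalently $K$, off the diagonal) is irreducible, i.e., the graph on $\{(i,j): K_{ij}\neq 0,\ i\neq j\}$ is connected. From the Riccati fixed-point form $K = Q + (\text{correction})$, the off-diagonal sparsity pattern of $K$ contains that of $Q$: indeed one shows inductively that the pattern of $K^{(t)}$ only grows (the correction term $-\alpha^2 K^{(t)}B(\cdots)^{-1}B^T K^{(t)}$ involves products of $K^{(t)}$ with itself, which cannot destroy a nonzero pattern given the sign-definiteness — all off-diagonal contributions are $\le 0$, so no cancellation against the $\le 0$ entries coming from $Q$). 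Since $Q = \kappa_C(\mc{L}\otimes I_2)$ and $\mc{L}$ is the Laplacian of a connected graph, the pattern of $Q$ is already connected, so $K$ is irreducible, hence $K^{-1}>0$; and then $K$ itself having a zero off-diagonal entry would contradict $K$ being a nonsingular $M$-matrix with $K^{-1}>0$ — more directly, one shows the nonnegativity of $D$ combined with connectivity forces every entry of $K$ to be nonzero. The main obstacle I anticipate is the inductive sign/sparsity tracking of the Riccati correction term: one must be careful that the matrix $(\alpha B^T K^{(t)} B + R)^{-1}$, although itself having no particular sign pattern, when sandwiched as $B(\cdots)^{-1}B^T$ and multiplied on both sides by $K^{(t)}$, produces a matrix whose off-diagonal entries do not flip sign in a way that could cancel the Laplacian's contribution — this is where the $M$-matrix invariance proven in the first part does the real work, since it guarantees the accumulated off-diagonal entries stay $\le 0$ throughout, ruling out cancellation.
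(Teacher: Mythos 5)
Your fixed-point/invariance strategy is a genuinely different route from the paper, which instead block-partitions $K$, derives a quadratic matrix equation for $R^{-1}K_f$, solves it in closed form as $G+\sqrt{G+\tfrac{4}{\alpha}}\sqrt{G}-\tfrac{2(1-\sqrt{\alpha})}{\alpha}I_{2N}$, and invokes results on square roots of $M$-matrices together with an explicit formula for $K_{fl}$. Your Woodbury identity is correct, but the justification of your crux --- that $\bigl(M^{-1}+cBB\tr\bigr)^{-1}$ is again an $M$-matrix --- is not: it is \emph{false} that a symmetric positive definite, entrywise positive matrix must be the inverse of an $M$-matrix (deciding which nonnegative matrices are inverse $M$-matrices is precisely the hard ``inverse $M$-matrix problem'' of \cite{Johnson}; inverse-positivity is equivalent to the $M$-property only within the class of $Z$-matrices). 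The step you actually need is the narrower closure fact that an inverse $M$-matrix plus a nonnegative \emph{diagonal} matrix is again an inverse $M$-matrix; with that substitution the invariance argument can go through, modulo the technicality that $K^{(0)}=Q$ is a singular Laplacian, so $M^{-1}$ does not exist at the first iterate and the Woodbury form must be handled with care.

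The second assertion is where the proposal genuinely fails. You correctly conclude that $K$ is an irreducible nonsingular $M$-matrix and hence $K^{-1}>0$ entrywise, and then claim that a zero off-diagonal entry of $K$ would ``contradict'' this. It would not: $2I_n$ minus the adjacency matrix of a path is an irreducible nonsingular $M$-matrix with strictly positive inverse and many zero entries --- indeed $Q$ itself in this paper is such a matrix, and the entire content of the claim is that the Riccati map destroys that sparsity. Your fallback, tracking the growth of the sparsity pattern along the iteration, only shows that the pattern of $K$ \emph{contains} that of $Q$, which is far from ``all entries nonzero''; completing it would require showing both that the repeated products genuinely fill in every entry after finitely many steps and that no entry created at a finite stage cancels in the limit, neither of which you supply (and the sign bookkeeping for $-\alpha^2 K^{(t)}B(\cdot)^{-1}B\tr K^{(t)}$ is itself nontrivial). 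The paper obtains the fill-in directly from its closed forms: $\sqrt{G}$ has all entries nonzero when $G$ is an irreducible positive definite $M$-matrix, and $K_{fl}=(1-\alpha)(sI-D)^{-1}Q_{fl}$ with $(sI-D)^{-1}$ strictly positive and no zero column of $Q_{fl}$ for a connected graph. An argument of that type is needed; irreducibility of $K$ alone does not deliver the conclusion.
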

\begin{proof}
The proof is presented in the appendix.
\end{proof}
\vspace{0.1in}

Problems having structure similar to $\mc{P}$ have been studied in the context of optimal consensus problems for general LTI systems (see for example \cite{Trecate, Muller, Dunbar, Kazeroonio, Bauso}, and \cite{Cao}). In all of these references, consensus problem was formulated as an infinite horizon LQR problem with the objective of minimizing disagreement among the agents with minimum control effort. In \cite{Cao} it was shown that the optimal solution to this problem is centralized. Since the problem could not be solved in a decentralized manner, approximations were introduced to decouple the cost along the entire trajectory.  In \cite{Trecate, Li, Kazeroonio} and \cite{Bauso} each agent assumed that all of its neighbors remain stationary for all the future time. Based on this assumption, optimal control trajectories were computed and implemented for one time step, a standard approach in MPC, and the process was repeated. In \cite{Muller}, the same stationarity assumption was used but the update rule was considered to be sequential, i.e., at each time only one agent was allowed to compute its optimal trajectory. In \cite{Dunbar}, each agent transmitted its assumed trajectory to all of its neighbors. At the same time it received such trajectories from its neighbors. Then, each agent used these assumed trajectories of its neighbors to solve its optimization problem and computed its actual trajectory. For stability, the actual trajectory and the assumed trajectory of each agent should be close.  

Although $\mc{P}$ has the same cost as the references mentioned, the goal is significantly different. The schemes in the references are primarily designed for achieving a certain steady state performance (consensus). To ensure that the trajectories computed by agents are feasible, and converge to the consensus set, these schemes rely on extensive communication among neighboring agents. In fact, some schemes require communication of entire trajectories among the neighboring agents repeatedly at each decision time step (\cite{Muller} and \cite{Keviczky}). This communication results in considerable energy overhead that is typically not taken into account. 

\emph{Our focus is on developing a framework for minimizing this communication overhead and ensuring that the scheme is distributed and real time implementable.} The price of reducing inter-agent communication will be in terms of global performance. Therefore, we will analyze the effects on system performance and provide upper bounds on the performance gap between the proposed and optimal policies. Furthermore, the proposed framework is not limited to a particular problem. Instead, it is intended to provide energy aware local coordination algorithms that can provide performance guarantees for a class of multiagent systems with fixed communication network.

\subsection{Main Contribution}   
Based on the above discussion, it is inefficient in terms of communication energy overhead to solve problem $\mc{P}$ in its original form. Another formulation of this problem is presented in Eq. (\ref{eq:OneStepOptimal}) in which the problem is reduced to one step lookahead with optimal cost to go as the terminal cost. The optimal cost to go is a quadratic function of $K$, the positive definite solution of the Riccati equation (\ref{eq:Riccati_infinite}), which is a function of communication network $\mc{L}$, system dynamics $B$, and communication and mobility constants $\kappa_C$ and $\kappa_M$ respectively. From Assumption (1), $\mc{L}$ is fixed, so $K$ can be computed offline before the nodes are deployed. From Prop. \ref{prop:K_Matrix}, all the entries of $K$ are non-zero. Therefore, the cost in Eq. (\ref{eq:OneStepOptimal}) has a local component and global component. Each node can compute its stage cost $g({\bf z}_k,{\bf u})$ using the current information of its neighbors only. However, the terminal cost requires information of all the nodes in the network. 

Since both the stage and the terminal costs are convex, the distributed optimization algorithm presented in \cite{Nedic} can guarantee global optimal solution by allowing communication among neighboring nodes only. The algorithm is a consensus based distributed optimization algorithm to minimize the following cost
\begin{align*}
\min \sum_{i = 1}^N f_i(x)~~~~~\text{s.t.}~~~~ x \in \field{R}^m,
\end{align*}
where $x$ is the decision vector, $f_i:\field{R}^m \rightarrow \field{R}$ is the local cost of agent $i$, and each agent can only communicate with its neighbors. To solve this problem, each agent maintains an estimate of the entire decision vector. It communicates its estimate with its neighbors, receives their estimates, and updates its estimate of the decision vector by combining the information it received form its neighbors. For computing action at time $k$, the above process is repeated for a specified number of iterations which impacts the quality of the solution. If the size of the decision vector is small, the communication overhead of the algorithm can be tolerated. However, in multiagent systems, the size of the decision vector typically depends directly on the number of nodes $N$ and can be very large. Consequently, the size of the data packet can get large which will result in significant energy overhead. 

In the proposed framework, we approximate the global component of the cost (terminal cost) with a function that is close to the original function in some sense and is locally computable. This implies that the size of the data packet for each node will depend on the cardinality of its neighborhood set. Typically, multiagent systems are sparsely connected, i.e., $|\mc{N}_i| \ll N$, so the size of the data packet is significantly reduced. \emph{The main idea proposed in this work for designing suboptimal policies for each node $i \in \{1,2,\ldots,N\}$ is to impose the communication network constraints of node $i$ on the global optimal cost matrix $K$.} The resulting matrix $K_{\mc{L}}$ will observe the constraints of the communication network, i.e., $(K_{\mc{L}})_{ij} = 0$ if $j \notin \mc{N}_i$,  and will be used to design approximate cost to go for the LQR problem. The suboptimal policy for each node is to  solve one step lookahead problem with this approximate cost to go as the terminal cost using some efficient distributed optimization algorithm. This procedure for designing suboptimal policies will guarantee that the terminal cost can be computed locally and will simplify performance comparison with the global optimal.  
 \section{Energy Aware Architectures}\label{sec:Energy-aware architectures}
In this section we propose three Energy Aware Policies (EAP)s. The first two policies are distributed in nature and the last one is decentralized. 
\begin{definition} 
A policy $\bs{\mu}$ is distributed if the control action of each agent at time $k$ is a function of its current state and the current state and input of its neighbors, i.e., $\mu_{i,k}$ depends on $z_i(k)$, ${\bf z}_{-i,k}$ and ${\bf u}_{-i,k}$.   
\end{definition}

\begin{definition}
A policy $\bs{\mu}$ is decentralized if the control action of each agent is a function of its current state and the current states of its neighbors, i.e., $\mu_{i,k}$ depends on $z_i(k)$, ${\bf z}_{-i,k}$.
\end{definition}

In a distributed policy, node $i$ will have to repeatedly communicate with its neighbors to know their current inputs. However, the current state of the neighbors in a decentralized policy can either be sensed if the nodes are equipped with the required sensors, or it can be communicated by one time communication.   

\subsection{Distributed Energy Aware Policies}
Next we propose two distributed energy aware policies EAPs I \& II. In EAP I, we formulate a semidefinite program to find a matrix that is closest to $K$ in terms of Frobenius norm and satisfies the desired sparsity structure. \newpage
\emph{Energy Aware Policy I} 

Compute the positive definite matrix $K$ offline that satisfies Riccati equation (\ref{eq:Riccati_infinite}). Solve the following semidefinite program to find the projection of $K$ on the sparsity structure of $\mc{L'} = (\mc{L}\otimes I_2)$. 
\begin{align}
&\min_{K^{\text{I}}_{\mc{L}}} \Vert K^{\text{I}}_{\mc{L}} - K \Vert_F \nonumber \\ 
& \text{s.t.      }  ~~~ K^{\text{I}}_{\mc{L}}  \geq 0, \nonumber \\
&(K^{\text{I}}_{\mc{L}})_{ij} = 0 ~~ \text{if}~~~ \mc{L'}_{ij} = 0  \nonumber \\ 
&\forall i \text{ and }j \in \{1,\ldots,2(N+2)\}.  \nonumber
\end{align}
%
The proposed approximate cost to go function is $$\tilde{J}^{\text{I}}({\bf z}_k)  = {\bf z}_k\tr K^{\text{I}}_{\mc{L}} {\bf z}_k,$$ and each node solves the following one stage optimization problem to compute its stationary suboptimal policy:
\begin{equation}\label{eq:policy_iteraton1}
\bs{\mu}^{\text{I}}({\bf z}_k) = \argmin_{{\bf u}}  g({\bf z}_k, {\bf u})+  \alpha \tilde{J}^{\text{I}}({\bf z}_k+B{\bf u}),
\end{equation}
with dynamics and constraints specified in \ref{eq:Problem_infinite}.

\begin{prop} 
\emph{Let $P_K$ be defined as}
\[
(P_K)_{ij} = \left\{
 \begin{array}{ll}
  K_{ij} &: \mc{L'}_{ij} \neq 0 \\
  0  &: \text{otherwise}
 \end{array}
 \right.
\]
\emph{Then} $K^{\text{I}}_{\mc{L}} = P_K$, \emph{i.e., the projection of $K$ on $\mc{L}'$ can be computed by simply replacing the entries of $K$ corresponding to non-neighboring agents with zeros. }
\end{prop}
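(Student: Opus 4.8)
\noindent\emph{Proof proposal.} The plan is to recognize $P_K$ as the Euclidean (Frobenius) projection of $K$ onto the coordinate subspace carved out by the sparsity constraints, and then to verify that this projection already satisfies the positive semidefiniteness constraint, so that discarding that constraint does not move the optimizer. First I would set $n = 2(N+2)$ and let $\mc{V} = \{M \in \field{R}^{n\times n} : M_{ij} = 0 \text{ whenever } \mc{L}'_{ij} = 0\}$, a linear subspace spanned by the elementary matrices $E_{ij}$ with $\mc{L}'_{ij} \neq 0$, which are orthonormal in the Frobenius inner product. Expanding $K$ in this orthonormal family shows that its orthogonal projection onto $\mc{V}$ is exactly $\sum_{\mc{L}'_{ij}\neq 0} K_{ij} E_{ij} = P_K$; equivalently $P_K = \argmin_{M \in \mc{V}} \Vert M - K\Vert_F$, and this minimizer is unique because it is a projection onto a closed convex set. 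Note also that $P_K$ is symmetric since $K$ and the support pattern of $\mc{L}'$ are.

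The feasible set of the semidefinite program is $\mc{S} = \{M \in \mc{V} : M \text{ positive semidefinite}\}$, which is contained in $\mc{V}$. Hence, once I show $P_K \in \mc{S}$ — i.e.\ that $P_K$ is itself positive semidefinite — it follows that $P_K$ minimizes $\Vert\, \cdot\, - K\Vert_F$ over the larger set $\mc{V} \supseteq \mc{S}$ while lying in $\mc{S}$, so $P_K$ must be the (unique) optimizer $K^{\text{I}}_{\mc{L}}$, which is the claim.

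The crux is therefore to prove that $P_K$ is positive semidefinite, and this is where I would invoke Proposition \ref{prop:K_Matrix}: $K$ is a symmetric nonsingular $M$-matrix, so $K_{ii} > 0$ for every $i$ and $K_{ij} \leq 0$ for $i \neq j$. Write $K = \Lambda - N$ with $\Lambda = \mathrm{diag}(K_{11},\dots,K_{nn})$ positive diagonal and $N$ the entrywise-nonnegative matrix with zero diagonal and $N_{ij} = -K_{ij}$ for $i \neq j$. Since $\mc{L}$ is the Laplacian of a connected (line) graph, its diagonal entries $|\mc{N}_i|$ are strictly positive, so $\mc{L}'_{ii} \neq 0$ and $P_K$ retains the full diagonal of $K$; thus $P_K = \Lambda - N'$ where $N'$ is obtained from $N$ by zeroing some off-diagonal entries, i.e.\ $0 \leq N' \leq N$ entrywise. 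Conjugating by $\Lambda^{-1/2}$ (a congruence, hence inertia-preserving) gives $\Lambda^{-1/2} K \Lambda^{-1/2} = I - \hat N$ and $\Lambda^{-1/2} P_K \Lambda^{-1/2} = I - \hat N'$ with $\hat N, \hat N'$ symmetric and nonnegative and $0 \leq \hat N' \leq \hat N$ entrywise. For a symmetric nonnegative matrix the largest eigenvalue equals the spectral radius, and the spectral radius is monotone under the entrywise order on nonnegative matrices, so $\rho(\hat N') \leq \rho(\hat N)$; since $K$ is positive definite we have $\rho(\hat N) < 1$, hence $\rho(\hat N') < 1$, so $I - \hat N'$ is positive definite and $P_K = \Lambda^{1/2}(I - \hat N')\Lambda^{1/2}$ is positive definite, in particular positive semidefinite. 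This places $P_K$ in $\mc{S}$ and completes the argument.

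I expect the projection step to be routine bookkeeping; the only genuine obstacle is the positive-semidefiniteness check for $P_K$, namely the fact that zeroing off-diagonal entries of a symmetric positive definite $M$-matrix preserves positive definiteness. The Perron--Frobenius monotonicity route sketched above is the cleanest; an alternative would be to derive the same fact from an $LDL\tr$ factorization of symmetric $M$-matrices with positive pivots, but I would present the spectral-radius argument.
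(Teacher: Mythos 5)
Your proposal is correct, and while its overall architecture matches the paper's (show $P_K$ is feasible, then show it already minimizes the Frobenius distance over the larger set cut out by the sparsity constraints alone, so the semidefiniteness constraint is inactive), your treatment of the one nontrivial step --- positive definiteness of $P_K$ --- takes a genuinely different and in fact more robust route. The paper argues via Gershgorin circles: since $K$ is an $M$ matrix, zeroing off-diagonal entries only shrinks the radii, and the circles of $P_K$ stay in the open right half plane provided $K_{ii} > \sum_{j\neq i}|K_{ij}|$; the paper asserts this strict diagonal dominance follows from $K>0$, but a symmetric positive definite $M$ matrix need not be diagonally dominant (e.g. $\left(\begin{smallmatrix}1 & -0.6 & -0.6\\ -0.6 & 1 & 0\\ -0.6 & 0 & 1\end{smallmatrix}\right)$ has smallest eigenvalue $1-0.6\sqrt{2}>0$ yet off-diagonal row sum $1.2>1$), so that step requires either a problem-specific justification or a different argument. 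Your route --- congruence by $\Lambda^{-1/2}$ to reduce to $I-\hat N$ versus $I-\hat N'$ with $0\le \hat N'\le \hat N$ entrywise, then Perron--Frobenius monotonicity of the spectral radius together with the fact that for a symmetric nonnegative matrix the spectral radius is the largest eigenvalue --- closes exactly this gap and uses only that $K$ is a symmetric positive definite $M$ matrix, which is what the preceding proposition supplies. The optimality half of your argument (orthogonal projection onto the coordinate subspace spanned by the retained entries, uniqueness, and the observation that the positive semidefiniteness constraint is slack at $P_K$) is the same as the paper's entrywise Frobenius-norm computation, just phrased more structurally; what your version buys is a proof of the key lemma that does not rest on an unproved diagonal-dominance claim.
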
 \vspace{0.1in}
\begin{proof}
Firstly, $P_K$ satisfies the desired sparsity constraints by construction. Secondly, $P_K$ is symmetric because $K$ and $\mc{L}'$ are both symmetric. To show that $P_K$ is positive definite, we use the fact that $K$ is an $M$ matrix, i.e., all the diagonal entries are positive and the off-diagonal entries are non-positive, and $K_{ii} > - \sum_{j=1}^{N+2} K_{ij}$ since $K>0$. Therefore, all the Gershgorin circles of $P_K$ lie in the positive half plane, and so all of its eigenvalues are positive. Thus, $P_k$ satisfies all the constraints. To prove that $P_K$ is optimal, we use the definition of Frobenius norm. 
\begin{equation*}
\Vert P_k - K \Vert_F = \sqrt{\sum\limits_{i=1}^{2(N+2)} \sum\limits_{j=1}^{2(N+2)} |(P_k)_{ij}-K_{ij}|^2}
\end{equation*}
For $(i,j)$ pair such that $\mc{L}'_{ij} = 0$, the value of $K^{\text{I}}_{\mc{L}}$ is fixed by the constraint. For $(i,j)$ such that $\mc{L}'_{ij} \neq 0$, any value for $(K^{\text{I}}_{\mc{L}})_{ij}$ other than $(P_K)_{ij} = K_{ij}$ will result in a positive contribution in the error term, so $(K^{\text{I}}_{\mc{L}})_{ij} = K_{ij}$ is optimal, which concludes the proof.  
\end{proof}
Therefore, we can decompose the matrix $K$ into a sum of two matrices
\begin{equation*}
K = K^{\text{I}}_{\mc{L}} + K^{\text{I}}_{\mc{L}^c}, 
\end{equation*}
where 
\[(K^{\text{I}}_{\mc{L}^c})_{ij} = \left\{
  \begin{array}{ll}
    K_{ij} &:  \mc{L'}_{ij} = 0 \\
    0 &:  \text{otherwise}
 \end{array}
\right.
\]
%
%
%
%
%
%
\begin{prop} \label{prop:EAPI_dist}
\emph{EAP I with approximate cost to go }
\begin{equation}\label{eq:Policy1}
\tilde{J}^{\text{I}}({\bf z}_k) = {\bf z}_k\tr K^{\text{I}}_{\mc{L}}{\bf z}_k.
\end{equation}
\emph{is a distributed policy, i.e., $\mu^{\text{I}}_{i}$ depends on $z_i(k)$, ${\bf z}_{-i,k}$, and ${{\bf u}}_{-i,k}$}
\end{prop}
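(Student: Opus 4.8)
Proof proposal.

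\textbf{Plan.} The idea is to compute $\bs{\mu}^{\text{I}}$ in closed form from the convex one-stage problem (\ref{eq:policy_iteraton1}) and then read the dependency structure of agent $i$'s action directly off the sparsity pattern of $K^{\text{I}}_{\mc{L}}$. First I would note that in (\ref{eq:policy_iteraton1}) the term ${\bf z}_k\tr Q{\bf z}_k$ in $g({\bf z}_k,{\bf u})$ does not depend on ${\bf u}$, so $\bs{\mu}^{\text{I}}({\bf z}_k)$ minimizes ${\bf u}\tr R{\bf u} + \alpha({\bf z}_k+B{\bf u})\tr K^{\text{I}}_{\mc{L}}({\bf z}_k+B{\bf u})$ subject to the (linear) dynamics and constraints of (\ref{eq:Problem_infinite}). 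Since $R=\kappa_M I_{2N}\succ 0$ and $K^{\text{I}}_{\mc{L}}\succ 0$ by the preceding proposition, this objective is strictly convex in ${\bf u}$, so its minimizer is the unique solution of the first-order condition $(R+\alpha B\tr K^{\text{I}}_{\mc{L}} B)\,{\bf u} = -\alpha B\tr K^{\text{I}}_{\mc{L}}\,{\bf z}_k$.

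Next I would exploit the block structure of $B$. Partition the $2(N+2)$ coordinates into $2\times 2$ blocks indexed by the nodes $1,\dots,N+2$. Because $B\tr$ simply extracts the $2N$ relay-node coordinates, $B\tr K^{\text{I}}_{\mc{L}} B$ is the relay--relay principal submatrix of $K^{\text{I}}_{\mc{L}}$ and $B\tr K^{\text{I}}_{\mc{L}}$ is the block of its first $N$ block-rows. By the earlier proposition $K^{\text{I}}_{\mc{L}}=P_K$, so the $(i,j)$ block of $K^{\text{I}}_{\mc{L}}$ vanishes whenever $j\notin\mc{N}_i\cup\{i\}$ (inherited from the sparsity of $\mc{L}'=\mc{L}\otimes I_2$). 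Writing the first-order condition block-row by block-row for relay node $i$, and using that $R$ is block-diagonal, yields
\[
\big(\kappa_M I_2+\alpha (K^{\text{I}}_{\mc{L}})_{ii}\big)\,u_i(k) \;=\; -\alpha (K^{\text{I}}_{\mc{L}})_{ii}\,z_i(k)\;-\;\alpha\sum_{j\in\mc{N}_i}(K^{\text{I}}_{\mc{L}})_{ij}\big(z_j(k)+[B{\bf u}]_j\big),
\]
where $[B{\bf u}]_j=u_j(k)$ if $j$ is a relay node and $[B{\bf u}]_j=0$ if $j$ is a base station (a known constant).

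Finally, $\kappa_M I_2+\alpha(K^{\text{I}}_{\mc{L}})_{ii}$ is positive definite, hence invertible, so the displayed equation determines $u_i(k)$ as a function of $z_i(k)$, of $\{z_j(k):j\in\mc{N}_i\}$, and of $\{u_j(k):j\in\mc{N}_i\}$ — that is, of $z_i(k)$, ${\bf z}_{-i,k}$ and ${\bf u}_{-i,k}$ — which is exactly the definition of a distributed policy, giving the claim. I do not expect a substantive obstacle here; the only points requiring care are bookkeeping the $2\times 2$ block/Kronecker structure and handling the base-station indices correctly (they enter as constants, not unknowns). The one conceptual remark worth including is that the $N$ block equations above are mutually coupled ($u_i$ appears in agent $j$'s equation for $j\in\mc{N}_i$ and vice versa), so $\bs{\mu}^{\text{I}}$ is characterized implicitly as the joint solution of this sparse linear system — equivalently, the optimality system of the convex program (\ref{eq:policy_iteraton1}), which is consistent and uniquely solvable — and can be evaluated by a distributed iterative scheme exchanging only the local pairs $(z_i,u_i)$ among neighbors; it is in general not a decentralized static gain.
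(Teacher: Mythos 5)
Your proof is correct, and it reaches the same conclusion as the paper via a slightly different route. The paper argues at the level of the objective: using the $M$-matrix structure of $K$ it rewrites $\tilde{J}^{\text{I}}({\bf z}_k)=\sum_i \tilde{J}^{\text{I}}_i(z_i(k),{\bf z}_{-i,k})$ with $\tilde{J}^{\text{I}}_i=\sum_{j\in\mc{N}_i}\vert(K^{\text{I}}_{\mc{L}})_{ij}\vert\,\Vert z_i-z_j\Vert^2+\bigl((K^{\text{I}}_{\mc{L}})_{ii}-\sum_{j\in\mc{N}_i}\vert(K^{\text{I}}_{\mc{L}})_{ij}\vert\bigr)\Vert z_i\Vert^2$, and likewise decomposes the stage cost, so that each agent's effective subproblem visibly involves only $z_i$, ${\bf z}_{-i}$ and ${\bf u}_{-i}$. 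You instead write the first-order condition $(R+\alpha B\tr K^{\text{I}}_{\mc{L}}B)\,{\bf u}=-\alpha B\tr K^{\text{I}}_{\mc{L}}{\bf z}_k$ of the joint strictly convex program and read the locality off the block sparsity of that linear system. Both hinge on the same fact — $(K^{\text{I}}_{\mc{L}})_{ij}=0$ for $j\notin\mc{N}_i\cup\{i\}$ — and your version has the merit of being fully explicit about the neighbor-to-neighbor coupling in ${\bf u}$ (the implicit fixed-point character of $\bs{\mu}^{\text{I}}$), which the paper only surfaces later when it introduces the distributed subgradient iteration. What your route does not expose, and the paper's does, is the specific pairwise-distance-plus-$\Vert z_i\Vert^2$ form of $\tilde{J}^{\text{I}}_i$, which the paper reuses immediately afterward to explain the origin-bias of EAP I and to motivate the refined projection in EAP II; so the objective-decomposition form is worth keeping even if you prefer your derivation of distributedness.
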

\vspace{0.05in}
\begin{proof}
The stage cost $g({\bf z}_k, \bs{\mu}^{\text{I}}({\bf z}_k))$ can be expanded into
\begin{align}\label{eq:stageCost_agent}
g({\bf z}_k, \bs{\mu}^{\text{I}}({\bf z}_k)) &= \sum_{i = 1}^N g_i(z_i(k), {\bf z}_{-i,k},{\mu}^{\text{I}}_{i}(z_i(k))) \nonumber \\
 g_i(z_i(k), {\bf z}_{-i,k},{\mu}^{\text{I}}_{i}(z_i(k)))&= \frac{1}{2}\sum_{j \in \mc{N}_i} \kappa_C \Vert z_i(k) - z_j(k)\Vert ^2 + \kappa_M \Vert \mu^{\text{I}}_{i}(z_i(k)) \Vert^2.
\end{align}
From the definition of $K^{\text{I}}_{\mc{L}}$ and using the fact that matrix $K$ is a positive definite $M$ matrix, (\ref{eq:Policy1}) can be written as 
\begin{align}\label{eq:costToGo_agent}
\tilde{J}^{\text{I}}({\bf z}_k) &= \sum_{i = 1}^N \tilde{J}^{\text{I}}_i(z_i(k),{\bf z}_{-i,k}), \nonumber \\
\tilde{J}^{\text{I}}_i(z_i(k),{\bf z}_{-i,k}) &=  \sum_{j \in \mc{N}_i} \vert(K^{\text{I}}_{\mc{L}} )_{ij}\vert \Vert z_i(k) - z_j(k)\Vert^2 +
\left((K^{\text{I}}_{\mc{L}} )_{ii}-\sum_{j \in \mc{N}_i}|(K^{\text{I}}_{\mc{L}} )_{ij}|\right) \Vert z_i(k)\Vert^2.
\end{align}
In the above equation, the second summation appears because the terms corresponding to non-neighboring nodes of $i$ are set equal to zero in the definition of $K^{\text{I}}_{\mc{L}}$. From Eqs. (\ref{eq:stageCost_agent}) and (\ref{eq:costToGo_agent}), it is straightforward that when solving (\ref{eq:policy_iteraton1}), each node will effectively be solving 
\begin{align}\label{eq:policy1_agent}
 {\mu}^{\text{I}}_{i}(z_i(k),{\bf z}_{-i,k}) = \argmin_{u_i}  g_i(z_i(k),{\bf z}_{-i,k},u_i)+  \alpha \tilde{J}^{\text{I}}_i(z_i(k+1),{\bf z}_{-i,k+1}),
\end{align}
where $z_i(k+1) = z_i(k) + B u_i$. To solve the above problem, node $i$ only requires its local information, i.e., its own states and the states and inputs of its neighbors.
\end{proof}

To solve (\ref{eq:policy1_agent}), node $i$ needs to know ${\bf u}_{-i,k}$ as $\tilde{J}^{\text{I}}_i(z_i(k+1),{\bf z}_{-i,k+1})$ depends on it. This can be accomplished by implementing an efficient distributed optimization algorithm that can ensure that each node finally has an accurate estimate of the control inputs, ${\bf u}_{-i,k}$, that its neighbors will implement. The algorithm that we used  for simulation in the next section is a distributed subgradient algorithm that was presented in \cite{Nedic}. In this algorithm, each node maintains an estimate ${\bf \hat{u}}_i $ of the optimization variables that it needs to compute its cost. For our problem, at time $k$, ${\bf \hat{u}}_i \in \field{R}^{2(|\mc{N}_i|+1)}$ is node $i$'s estimate of $u_i(k)$ and ${\bf u}_{-i,k}$. In particular, if $\mc{N}_i = \{i_1,\ldots,i_{|\mc{N}_i|}\}$, then ${\bf \hat{u}}_i = [\hat{u}_i^i ~~{\bf \hat{u}}_{-i}] $ where  ${\hat{u}}_i^j$ is node $i$'s estimate of $u_j(k)$ for $j \in \{\mc{N}_i \cup i\}$ and the vector ${\bf \hat{u}}_{-i} = [\hat{u}_i^{i_1 }~~\hat{u}_i^{i_2} \ldots \hat{u}_i^{|\mc{N}_i|}]$. 
The main idea is the use of consensus to ensure that the estimates of the neighboring nodes converge to same values. To compute $u_i(k)$ and ${\bf u}_{-i,k}$, node $i$ performs the following steps:
\newpage
\emph{Distributed Projected Subgradient Algorithm \cite{Nedic}}
\begin{description}
\item[0] At iteration 0, node $i$ initializes its estimate vector ${\bf \hat{u}}_i $ with some random values.  
\item [1] At iteration $m$, node $i$ updates its estimate for all $j \in \{\mc{N}_i \cup i\}$ as follows:
\[v_i^j = \left\{
  \begin{array}{ll}
  \frac{1}{2}(\hat{u}_i^j(m) + \hat{u}_j^j(m) ) &:  i \neq j \\
   \sum_{j \in \{\mc{N}_i \cup i\}}  \frac{1}{|\mc{N}_i | +1} \hat{u}_j^i(m)&:  i = j
   \end{array}
\right.
\]
\begin{align}\label{eq:estUpdate}
\hat{ u}^j_i(m+1) &= v_i^j - \gamma d_i(m).
\end{align}
\item[2] Repeat while $m \leq \text{iter}$.
\item[3] $u_i(k) = \hat{u}_i^i(\text{iter})$. 
\end{description}
In this algorithm, ``iter" is the total number of iterations of the algorithm, and $\gamma$ is the step size of the descent.
In Eq. (\ref{eq:estUpdate}), $d_i(m)$ is the gradient of the cost function of node $i$ evaluated at ${\bf \hat{u}}_i(m)$. To update its estimate, $\hat{ u}^j_i(m+1)$, node $i$ first combines the estimates from its neighbors, which is the consensus step in Eq. (\ref{eq:estUpdate}). Then it computes the gradient of its cost at ${\bf \hat{u}}_i(m)$ and updates it estimate by moving the consensus value towards the negative of the gradient. 

To compute its cost to go, each node exchanges its estimates of control values with its neighbors ``iter" times for all $k$. Although the proposed scheme has communication overhead, it is small since each node is only communicating its estimates of the current control values of itself and its neighbors. In some of the existing schemes for similar problems, nodes communicate their entire control and state trajectories with their neighbors which result in huge communication overhead depending on the horizon length and number of nodes in the network (\cite{Dunbar} and \cite{Keviczky}). 

In EAP I, we simply imposed the sparsity structure of the communication network on the optimal cost to go matrix $K$. The resulting approximate terminal cost in Eq.  (\ref{eq:costToGo_agent}) had one summation that consisted of the square of the distances of nodes $i$ and $j$, such that $(i,j) \in E$, weighted with $K_{ij}$. Those were the desired terms because the objective in Problem $\mc{P}$ is to minimize the distances between the neighboring nodes. However, Eq. (\ref{eq:costToGo_agent}) had a second summation which was 
\begin{equation*}
\left((K^{\text{I}}_{\mc{L}} )_{ii}-\sum_{j \in \mc{N}_i}|(K^{\text{I}}_{\mc{L}} )_{ij}| \right)\Vert z_i(k)\Vert^2.
\end{equation*}
One way to interpret the above term is that each node is trying to minimize its distance from the origin. Since base station $\mc{B}_1$ is assumed to be located at the origin, minimizing distance from the origin can be modeled by adding an edge between each node $i$ and $\mc{B}_1$. Therefore, the approximate terminal cost in EAP I is with respect to a new graph $G^{I}(V,E^I)$ where $E^{I} = E \cup S $ where $S = \{(i,N+1) \text{ for all  } i\in \{1,2,\ldots,N\}\}$ and $N+1$ is the index of $\mc{B}_1$. This remodeling of the structure of the system can have serious consequences that are evident in system simulation in Section \ref{sec:simulation} Fig. \ref{fig:trajectories}. Figures \ref{subfig:Traj_optimal} and \ref{subfig:Traj_EAPI} are the trajectories of relay nodes under the optimal policy and EAP I respectively. By comparing these trajectories, it is obvious that under EAP I, the trajectories of the nodes are biased towards the origin, which results in significant increase in the total cost of the system.   

Next we propose EAP II, which introduces a refined projection of the optimal cost that removes the undesired terms from the resulting approximate cost to go. The refined projection will improve performance as will be shown in Section \ref{sec:simulation} via simulations. \vspace{0.1in}\\
\emph{Energy Aware Policy II}\\
Compute the positive definite matrix $K$ offline that satisfies Riccati equation (\ref{eq:Riccati_infinite}). Decompose the matrix $K$ into sum of two matrices.
\begin{equation*}
K = K^{\text{II}}_{\mc{L}} + K^{\text{II}}_{\mc{L}^c}, 
\end{equation*}
such that 
\[(K^{\text{II}}_{\mc{L}})_{ij} = \left\{
  \begin{array}{ll}
    K_{ij} &: i\neq j \text{ and } \mc{L}'_{ij} \neq 0 \\
    K_{ii} - \sum_{p \not \in \mc{N}_i} K_{ip} &: i = j   \\
    0  &: \text{otherwise}
  \end{array}
\right.
\]
and 
\[(K^{\text{II}}_{\mc{L}^c})_{ij} = \left\{
  \begin{array}{ll}
    K_{ij} &:  i\neq j \text{ and } \mc{L}'_{ij} = 0 \\
    \sum_{p \not \in \mc{N}_i}K_{ip} &:    i = j\\
    0 &:  \text{otherwise}
  \end{array}
\right.
\]
The proposed approximate cost to go function at time $k$ is $$\tilde{J}^{\text{II}}({\bf z}_k)  = {\bf z}_k\tr K^{\text{II}}_{\mc{L}} {\bf z}_k,$$ where $K^{\text{II}}_{\mc{L}}$ is a refined projection of $K$ on the sparsity structure of $\mc{L}'$. Each agent solves the following one stage optimization problem to compute its stationary suboptimal policy:
\begin{equation}\label{eq:policy_iteration2}
\bs{\mu}^{\text{II}}({\bf z}_k)= \argmin_{{\bf u}}  g({\bf z}_k, {\bf u})+  \alpha \tilde{J}^{\text{II}}({\bf z}_k+B {\bf u}),
\end{equation}
with dynamics and constraints specified in \ref{eq:Problem_infinite}.\vspace{0.05in}
\begin{prop}\label{prop:EAPII_dist}
\emph{EAP II with approximate cost to go function at time $k$}
\begin{equation}\label{eq:Policy2}
\tilde{J}^{\text{II}}({\bf z}_k) = {\bf z}_k\tr K^{\text{II}}_{\mc{L}}{\bf z}_k
\end{equation}
\emph{is a distributed policy, i.e., $\mu^{\text{II}}_{i,k}$ depends on $z_i(k)$, $\textnormal{{\bf z}}_{-i,k}$, and $\textnormal{{\bf u}}_{-i,k}$. }
\end{prop}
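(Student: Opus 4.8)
The plan is to mirror the two-step argument used for Proposition~\ref{prop:EAPI_dist}. \emph{Step~1 (stage cost):} since the stage cost does not involve $K$, it splits exactly as in~(\ref{eq:stageCost_agent}), namely $g({\bf z}_k,\bs{\mu}^{\text{II}}({\bf z}_k)) = \sum_{i=1}^N g_i(z_i(k),{\bf z}_{-i,k},\mu^{\text{II}}_i(z_i(k)))$ with $g_i = \frac{1}{2}\sum_{j\in\mc{N}_i}\kappa_C\Vert z_i(k)-z_j(k)\Vert^2 + \kappa_M\Vert\mu^{\text{II}}_i(z_i(k))\Vert^2$, which depends only on $z_i(k)$, ${\bf z}_{-i,k}$ and $u_i=\mu^{\text{II}}_i(z_i(k))$. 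Nothing here differs from EAP~I.

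\emph{Step~2 (terminal cost):} the crux is to decompose $\tilde{J}^{\text{II}}({\bf z}_k)={\bf z}_k\tr K^{\text{II}}_{\mc{L}}{\bf z}_k$ into agent-local pieces. I would use two structural facts about $K^{\text{II}}_{\mc{L}}$: (a) off the diagonal it inherits the sparsity pattern of $\mc{L}'=\mc{L}\otimes I_2$, i.e. $(K^{\text{II}}_{\mc{L}})_{ij}=K_{ij}$ for $j\in\mc{N}_i$ and $(K^{\text{II}}_{\mc{L}})_{ij}=0$ for non-neighbors $j\neq i$; and (b) the diagonal entry $(K^{\text{II}}_{\mc{L}})_{ii}=K_{ii}-\sum_{p\notin\mc{N}_i}K_{ip}$ depends only on row $i$ of $K$, which is computed offline and hence known to node $i$. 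Regrouping the quadratic form into squared-difference form exactly as in the derivation of~(\ref{eq:costToGo_agent}), but with this diagonal in place of $(K^{\text{I}}_{\mc{L}})_{ii}$, yields $\tilde{J}^{\text{II}}({\bf z}_k)=\sum_{i=1}^N \tilde{J}^{\text{II}}_i(z_i(k),{\bf z}_{-i,k})$, where $\tilde{J}^{\text{II}}_i$ is a nonnegative combination of the edge terms $\Vert z_i(k)-z_j(k)\Vert^2$, $j\in\mc{N}_i$, plus a multiple of $\Vert z_i(k)\Vert^2$ whose coefficient the refined diagonal (equivalently, the companion matrix $K^{\text{II}}_{\mc{L}^c}$ in $K=K^{\text{II}}_{\mc{L}}+K^{\text{II}}_{\mc{L}^c}$) is designed to remove; that refinement is the source of EAP~II's improved performance, but for the present claim we only need that $\tilde{J}^{\text{II}}_i$ is a function of $z_i(k)$ and ${\bf z}_{-i,k}$. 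Fact~(a) is precisely what rules out any residual cross term $z_i(k)\cdot z_j(k)$ with $j\notin\mc{N}_i$.

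\emph{Step~3 (conclusion):} substituting Steps~1--2 into~(\ref{eq:policy_iteration2}) and using ${\bf z}_{k+1}={\bf z}_k+B{\bf u}_k$, the minimization over ${\bf u}$ reduces, for each node $i$, to $\mu^{\text{II}}_i(z_i(k),{\bf z}_{-i,k})=\argmin_{u_i}(g_i(z_i(k),{\bf z}_{-i,k},u_i)+\alpha\,\tilde{J}^{\text{II}}_i(z_i(k)+u_i,{\bf z}_{-i,k+1}))$, in which ${\bf z}_{-i,k+1}$ is obtained from ${\bf z}_{-i,k}$ and the neighbors' inputs ${\bf u}_{-i,k}$ through the dynamics. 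Hence $\mu^{\text{II}}_{i,k}$ is a function of $z_i(k)$, ${\bf z}_{-i,k}$ and ${\bf u}_{-i,k}$ only, i.e. $\bs{\mu}^{\text{II}}$ is distributed (and not decentralized, because the dependence on ${\bf u}_{-i,k}$ entering through the predicted neighbor positions cannot be eliminated), exactly as for EAP~I.

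The only real work is the algebraic check in Step~2: one must verify (i) that replacing the diagonal does not reintroduce coupling outside $\mc{N}_i$, which is immediate from $(K^{\text{II}}_{\mc{L}})_{ij}=0$ for $j\notin\mc{N}_i$, and (ii) that $K^{\text{II}}_{\mc{L}}$ is still positive semidefinite so that $\tilde{J}^{\text{II}}$ is a legitimate cost-to-go; (ii) I would obtain from a Gershgorin-circle argument like the one used to show the EAP~I projection $P_K$ is positive definite, invoking that $K$ is an $M$-matrix (Proposition~\ref{prop:K_Matrix}). I would present (i)--(ii) as two short auxiliary checks rather than rewriting the squared-difference regrouping in full.
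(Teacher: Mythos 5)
Your proposal is correct and follows essentially the same route as the paper: the stage cost splits agent-wise exactly as for EAP~I, and the terminal cost splits because $K^{\text{II}}_{\mc{L}}$ inherits the off-diagonal sparsity of $\mc{L}'$, with the dependence on ${\bf u}_{-i,k}$ entering only through the predicted neighbor positions. Your added checks (that the modified diagonal does not reintroduce non-neighbor coupling, and that $K^{\text{II}}_{\mc{L}}$ remains positive semidefinite) are sensible but not needed for the distributedness claim itself; the paper simply asserts the squared-difference regrouping and notes positive semidefiniteness separately when discussing stability.
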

 \vspace{0.05in} 
\begin{proof}
Using the same argument as in Prop. \ref{prop:EAPI_dist}, stage cost $g({\bf z}_k, \bs{\mu}^{\text{II}}({\bf z}_k))$ is decentralized. For cost to go, using the definition of $K^{\text{II}}_{\mc{L}}$ and the fact that matrix $K$ is a positive definite $M$ matrix, Eq. (\ref{eq:Policy2}) can be written as 
\begin{align*}
\tilde{J}^{\text{II}}({\bf z}_k) &= \sum_{i = 1}^N \tilde{J}^{\text{II}}_i(z_i(k),{\bf z}_{-i,k}), \nonumber \\
\tilde{J}^{\text{II}}_i(z_i(k),{\bf z}_{-i,k})&= \sum_{j \in \mc{N}_i} |(K^{\text{II}}_{\mc{L}})_{ij}| \Vert z_i(k) - z_j(k)\Vert^2 .
\end{align*}
In the above equation, the second summation consisting of the undesirable terms in Eq. (\ref{eq:costToGo_agent}) does not appear anymore because of the correction introduced in the definition of the refined projection matrix. It is obvious from the above arguments that when solving (\ref{eq:policy_iteration2}), each agent will effectively be solving 
\begin{align*}
{\mu}^{\text{II}}_i(z_i(k),{\bf z}_{-i,k}) = \argmin_{u_i}  g_i(z_i(k),{\bf z}_{-i,k},u_i)+  \alpha J^{\text{II}}(z_i(k+1),{\bf z}_{-i,k+1}).
\end{align*}
To solve the above problem, node $i$ only requires its own information and the information of its neighbors.
\end{proof}
Each node can compute its control action $\mu_i^{\text{II}}(z_i(k))$ by implementing the same distributed optimization algorithm presented for EAP I. \vspace{0.05in}

\subsection{Performance Analysis}
Next we analyze the performance of the policies presented in the previous section. However, the analysis carried out in this section is for a more general system in which there are $m$ base stations, and $N$ mobile nodes have to establish communication links between these base stations. Furthermore, we analyze system performance for an entire class of suboptimal policies. To summarize, we are interested in the analysis of the following one step look-ahead optimization problem:
\begin{align}\label{eq:approxProb}
&\min\limits_{{\bf u}} g({\bf z},{\bf u}) + \alpha \tilde{J}({\bf z}^+)\nonumber\\
&\text{s.t.} ~~~~~~{\bf z}^{+} = {\bf z} + B{\bf u},
\end{align}
where $g({\bf z},{\bf u})$ is defined in Eq. (\ref{eq:cost_stage_terminal}). Here, $$\tilde{J}({\bf z}) = {\bf z}\tr H {\bf z}$$ is an approximate cost to go, $H$ is any symmetric positive semi-definite matrix that satisfies the constraints of communication network, and $B = \left[  \begin{array}{c} 
I_{2N} \\
\hline
0_{2m\times 2N}  
\end{array} \right].
$ 
Since the problem is a one stage LQR problem, the optimal policy is $\hat{\bs{\mu}}({\bf z}) = -\hat{L} {\bf z}$,
\begin{equation}\label{eq:L_hat}
\hat{L} = \alpha(\alpha B\tr H B + R)^{-1}B\tr H,
\end{equation}
and the optimal cost for the approximate problem with one step look ahead is $\hat{J}({\bf z}) ={\bf z}\tr \hat{K} {\bf z}$
\begin{equation*}\label{eq:K_hat}
\hat{K} = \alpha H - \alpha^2 HB(\alpha B\tr HB+R)^{-1}B\tr H + Q.
\end{equation*}
We start the analysis by proving that the optimal policy $\hat{\bs{\mu}}$ results in stabilizing system dynamics. For analysis purposes, we use the following matrix partitioning
\begin{equation}\label{eq:partitionMatrix}
U=
\left[
\begin{array}{c|c}
U_f & U_{fl} \\
\hline
U_{lf} & U_{ll}
\end{array}
\right],
\end{equation}
where $U \in \field{R}^{2(N+m)\times2(N+m)}$, $U_f \in \field{R}^{2N \times 2N}$, $U_{fl} \in \field{R}^{2N \times 2m}$, $U_{lf} \in \field{R}^{2m \times 2N}$, and $U_{ll} \in \field{R}^{2m \times 2m}$. If $U$ is symmetric then $U_{lf} = U_{fl}\tr$. 

\vspace{0.05in}
\begin{lem} \label{prop:stability}
\emph{Let $H$ be a symmetric and positive semidefinite matrix, and let} $\tilde{J}({\bf z}) ={\bf z}\tr H {\bf z}.$ \emph{Then the system dynamics}
\begin{equation*}
{\bf z}^+ = (I_{2(N+m)} - B \hat{L}){\bf z}
\end{equation*}
\emph{are marginally stable, where $\hat{L}$ is defined in Eq. (\ref{eq:L_hat}).}
\end{lem}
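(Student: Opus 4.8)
\emph{Proof proposal.} The plan is to compute the closed-loop matrix $A_{cl} := I_{2(N+m)} - B\hat{L}$ in closed form using the partition of Eq.~(\ref{eq:partitionMatrix}) and then read off its spectrum. Since $B = \left[\begin{array}{c} I_{2N} \\ \hline 0_{2m\times 2N}\end{array}\right]$, we get $B\tr H = \left[\begin{array}{c|c} H_f & H_{fl}\end{array}\right]$ and $B\tr H B = H_f$, so from Eq.~(\ref{eq:L_hat}) the gain splits conformally as $\hat{L} = \left[\begin{array}{c|c}\hat{L}_f & \hat{L}_{fl}\end{array}\right]$ with $\hat{L}_f = \alpha(\alpha H_f + R)^{-1}H_f \in \field{R}^{2N\times 2N}$, while $B\hat{L}$ has its last $2m$ rows equal to zero. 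Consequently
\[
A_{cl} = \left[\begin{array}{c|c} I_{2N} - \hat{L}_f & -\hat{L}_{fl} \\ \hline 0 & I_{2m}\end{array}\right],
\]
which is block upper triangular; its eigenvalues are those of $I_{2N} - \hat{L}_f$ together with $\lambda = 1$ repeated $2m$ times, the latter coming from the stationary base stations.

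Next I would simplify the leading block. Set $M := \alpha H_f + R$, which is symmetric positive definite because $R = \kappa_M I_{2N} \succ 0$ while $H_f \succeq 0$ (a principal submatrix of the symmetric positive semidefinite matrix $H$). Then $\hat{L}_f = \alpha M^{-1}H_f = M^{-1}(M - R) = I_{2N} - M^{-1}R$, so
\[
I_{2N} - \hat{L}_f = M^{-1}R = \kappa_M(\alpha H_f + \kappa_M I_{2N})^{-1}.
\]
This matrix is symmetric, hence orthogonally diagonalizable with real spectrum, and if $\mu \ge 0$ ranges over the eigenvalues of $H_f$ its eigenvalues are $\kappa_M/(\alpha\mu + \kappa_M) \in (0,1]$. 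Combined with the first step, every eigenvalue of $A_{cl}$ lies in $(0,1]$, so $\rho(A_{cl}) \le 1$, which already rules out exponential instability.

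To upgrade $\rho(A_{cl}) \le 1$ to genuine marginal stability one must check that the eigenvalues on the unit circle (all equal to $1$) are semisimple, i.e.\ that $A_{cl}$ has no Jordan block at $\lambda = 1$. When $H_f$ is nonsingular, $I_{2N} - \hat{L}_f$ is a strict contraction, the only contribution at $\lambda = 1$ is the exact $I_{2m}$ block, and a direct count of the eigenvectors of the block-triangular $A_{cl}$ shows the geometric and algebraic multiplicities of $\lambda = 1$ coincide; hence the system is stable in the sense of Lyapunov with bounded trajectories. I expect this semisimplicity check --- in particular, verifying that the coupling block $-\hat{L}_{fl}$ does not induce a Jordan chain in the degenerate case where $H_f$ is singular --- to be the only delicate point; the spectral computation above is otherwise routine linear algebra and does not even involve $Q$.
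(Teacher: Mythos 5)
Your computation reproduces the paper's argument: the paper also partitions $B\hat L$ into the block upper--triangular form with leading block $\alpha(\alpha H_f+R)^{-1}H_f$ and zero last $2m$ rows, and reads off the eigenvalues $\alpha\hat\lambda_i/(\alpha\hat\lambda_i+\kappa_M)\in[0,1)$ of that block via a commuting--matrices argument; your closed form $I_{2N}-\hat L_f=\kappa_M(\alpha H_f+\kappa_M I_{2N})^{-1}$ is a cleaner route to the same spectrum and to $\rho(I_{2(N+m)}-B\hat L)\le 1$. The genuine gap is exactly the point you flag and then leave open: semisimplicity of the eigenvalue $1$ when $H_f$ is singular. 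You cannot set this case aside --- the lemma assumes only $H\succeq 0$, and it is invoked for $K^{\text{II}}_{\mc L}$, which is merely positive semidefinite --- and ``I expect this to be the only delicate point'' is not an argument. Counting eigenvectors block by block (which is essentially what the paper's own proof does) is invalid precisely when the two diagonal blocks share the eigenvalue $1$: the matrix $\begin{pmatrix}1&1\\0&1\end{pmatrix}$ has two diagonalizable $1\times 1$ diagonal blocks but is a Jordan block. So this step needs a real proof, and the paper glosses over it as well.

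The gap is closable, and here is how. With $S=I_{2N}-\hat L_f$ one has
\begin{align*}
(I_{2(N+m)}-B\hat L)-I_{2(N+m)}=\left[\begin{array}{c|c} -\hat L_f & -\hat L_{fl}\\ \hline 0 & 0\end{array}\right],\qquad
\hat L_f x+\hat L_{fl}y=\alpha(\alpha H_f+R)^{-1}\bigl(H_f x+H_{fl}y\bigr),
\end{align*}
so the eigenspace of $\lambda=1$ is $\{(x,y):H_fx+H_{fl}y=0\}$, whose dimension is $\dim\ker(H_f)+\dim\{y:H_{fl}y\in\mathrm{range}(H_f)\}$, while the algebraic multiplicity of $\lambda=1$ is $\dim\ker(H_f)+2m$ (the zero eigenvalues of $H_f$ plus the $I_{2m}$ block). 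Hence $\lambda=1$ is semisimple if and only if $\mathrm{range}(H_{fl})\subseteq\mathrm{range}(H_f)$. This inclusion is a standard consequence of $H\succeq 0$: if $H_fv=0$ then $0\le [tv^{\text{{\tiny $T$}}}\;w^{\text{{\tiny $T$}}}]\,H\,[tv^{\text{{\tiny $T$}}}\;w^{\text{{\tiny $T$}}}]^{\text{{\tiny $T$}}}=2t\,v\tr H_{fl}w+w\tr H_{ll}w$ for all $t\in\reals$, forcing $v\tr H_{fl}w=0$ for all $w$; thus $\ker(H_f)\subseteq\ker(H_{fl}\tr)$, which is equivalent to the range inclusion by symmetry of $H_f$. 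Adding this observation turns your proposal into a complete proof (and in fact repairs the paper's own eigenvector count).
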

\begin{proof}
Let $\tilde{H} = B \hat{L}$ and $\lambda_i = \lambda_i(I_{2(N+m)}  - \tilde{H})$ be the $i^{th}$ eigenvalue of $(I_{2(N+m)} - \tilde{H})$ such that $\lambda_1 \geq \lambda_2\geq \ldots\geq \lambda_{2(N+m)}$. To prove that the system is marginally stable, we need to show that $| \lambda_i | \leq 1$ for all $i = \{1,2,\ldots,2(N+m)\}$ and $(I_{2(N+m)} -\tilde{H})$ has $2(N+m)$ independent eigenvectors. By partitioning $H$ as in Eq. (\ref{eq:partitionMatrix}), $\alpha B\tr H B= \alpha H_f$. Thus,
\[
\tilde{H}=
\left[
\begin{array}{c|c}
\alpha (\alpha H_f + R)^{-1} H_f& \alpha ( \alpha H_f + R)^{-1} H_{fl}\\
\hline
0_{2m\times 2N} &  0_{2m\times 2m}
\end{array}
\right].
\]
Using the properties of block matrices, the eigenvalues of $\tilde{H}$ are the eigenvalues of $\alpha( \alpha H_f + R)^{-1}H_f$ and $0_{2m\times 2m}$. Therefore, $0_{2m\times 2m}$ contributes $2m$ zero eigenvalues and $\alpha(\alpha H_f + R)^{-1} H_f$ contributes $2N$ eigenvalues. Next we will show that these $2N$ eigenvalues are real, positive and less then one. 

Since $H$ is symmetric and positive semidefinite, $H_f = B\tr H B$ is also symmetric and positive semidefinite. Therefore, $H_f$ has real and non-negative eigenvalues and $2N$ independent eigenvectors. To show that the eigenvalues are also less then one, let $\hat{\lambda}_i$ be the $i^{th}$ eigenvalue of $H_f$. Then $\frac{1}{ \alpha \hat{\lambda}_i+\kappa_M}$ and $\frac{\alpha \hat{\lambda}_i}{\alpha \hat{\lambda}_i+\kappa_M}$are the corresponding eigenvalues of $(\alpha H_f + R)^{-1}$ and  $ (\alpha H_f + R)^{-1}\alpha H_f$ respectively. Here we have used the fact that if two matrices $P$ and $Q$ have the same set of eigenvectors, then they commute and $\lambda_i(PQ) = \lambda_i(P)\lambda_i(Q)$. Therefore, the eigenvalues of $(\alpha H_f + R)^{-1}\alpha H_f$ are always less than or equal to one for $\kappa_M>0$. This ensures that the eigenvalues of $I_{2N} - \alpha( H_f + R)^{-1}\alpha H_f$ are also less than or equal to one. Since the last $2m$ rows of $\tilde{H}$ are zero, $I_{2m} - \tilde{H}_{ll} =I_{2m}$ and there will be $2m$ more independent eigenvectors. This implies that the eigenvalues of $I_{2(N+m)} - \tilde{H}$ are always less then or equal to one with independent eigenvectors, which concludes the proof. 
\end{proof}
\vspace{0.05in}
We can now analyze the stability properties of the proposed policies based on Lem. \ref{prop:stability}. Since $K^{\text{I}}_{\mc{L}}$ is positive definite and $K^{\text{II}}_{\mc{L}}$ is positive semidefinite, the dynamics for EAP I \& II are  stable. 

An important consequence of Lem. \ref{prop:stability} is that given the initial locations of the mobile relay nodes, the state space of the system is bounded since the dynamics are stable. Therefore, the performance analysis of the system can be restricted to a bounded set $\mc{S}$ that is invariant under the system dynamics. We say that a set $\mc{S}$ is invariant under policy $\bs{\mu}$ if ${\bf z} \in \mc{S}$ implies that ${\bf z}^+ = {\bf z} + B \bs{{\mu}}({\bf z}) \in \mc{S}$. We define the max norm of a function $J$ over a set $\mc{S}$ by 
\begin{align*}
\Vert J \Vert_{\mc{S}} = \max_{{\bf z} \in \mc{S}} \vert J(z) \vert.
\end{align*}
For error analysis we use two mappings from \cite{Bertsekas}. 
Let $\mc{S}_{\hat{\mu}}$ be an invariant set under policy $\bs{\hat{\mu}}$, i.e., if ${\bf z} \in \mc{S}_{\hat{\mu}}$ then ${\bf z}^+ \in \mc{S}_{\hat{\mu}}$ where ${\bf z}^+ = {\bf z} + B \bs{\hat{\mu}}({\bf z})$. Let $\mc{S}^*$ be the minimum set such that $\mc{S}_{\hat{\mu}} \subseteq \mc{S}^*$ and ${\bf z} \in \mc{S}_{\hat{\mu}}$ implies that ${\bf z}^+ = {\bf z} + B \bs{\mu}^*({\bf z}) \in \mc{S}^*$. For any function $J: \mc{S}^* \rightarrow \field{R}$, the mappings $T$ and $T_{\hat{\mu}}$ are such that $(TJ):\mc{S}_{\hat{\mu}} \rightarrow \field{R}$ and $T_{\hat{\mu}} : \mc{S}_{\hat{\mu}} \rightarrow \field{R}$ and are defined as
\begin{align}\label{eq:Map_T_Tmu}
(TJ)({\bf z}) &= \min\limits_{{\bf u}} g({\bf z},{\bf u}) + \alpha {J}(A{\bf z}+ B {\bf u}), \nonumber \\
(T_{\hat{\mu}}J)({\bf z}) &= g({\bf z},\bs{\mu}({\bf z})) + \alpha {J}(A{\bf z}+ B \bs{\mu}({\bf z})).
\end{align}
Let $T^k$ and $T_{\hat{\mu}}^k$ be the composition of the mappings $T$ and $T_{\hat{\mu}}$ with themselves $k$ times respectively, i.e., 
\begin{align*}
T^kJ &= T(T^{k-1}J), \\
T_{\hat{\mu}}^k J&= T_{\hat{\mu}}(T_{\mu}^{k-1}J).
\end{align*}
For a stationary policy $\hat{\mu}$, the associated cost to go function $J^{\hat{\mu}}$ satisfies  $\lim_{k\to \infty} T_{\hat{\mu}}^k J $. 
It has been proved in \cite{Bertsekas} that the optimal cost to go $J^*$ satisfies Bellman equation $J^* = TJ^*$. Similarly, for a stable policy $\bs{\hat{\mu}}$, $J^{\hat{\mu}} = T_{\hat{\mu}}J^{\hat{\mu}}.$ A mapping $T$ is a contraction mapping if there exists a scalar $\beta <1$ such that
\begin{align*}
\Vert TJ - T\bar{J} \Vert_{\mc{S}} \leq \beta\Vert J - \bar{J} \Vert_{\mc{S}}
\end{align*}
The monotonicity lemma (Lem. 2.1 in \cite{Bertsekas}) states that for any two functions $J$ and $\bar{J}$ defined on $\mc{S}^*$ such that 
$$J({\bf z}) \leq \bar{J}({\bf z}) ~~~~~~\text{for all}~~~{\bf z}\in \mc{S}^*,$$
the following inequalities hold:
\begin{align*}
(TJ)({\bf z}) &\leq (T \bar{J})({\bf z})~~~\text{for all}~~ {\bf z}\in \mc{S}_{\hat{\mu}}, \\
(T_{\hat{\mu}}J)({\bf z}) &\leq (T_{\hat{\mu}} \bar{J})({\bf z})~~~\text{for all}~~ {\bf z}\in \mc{S}_{\hat{\mu}}.
\end{align*}
Finally, for the sets $\mc{S}_{\hat{\mu}}$ and $\mc{S}^*$ as defined above,

$$\Vert T_{\hat{\mu}} \tilde{J} - T J^*\Vert_{\mc{S}_{\hat{\mu}}} \leq \alpha \Vert \tilde{J} - J^* \Vert_{\mc{S}^*}.$$ To show this, the first step is to recognize that $\bs{\hat{\mu}}$ is the greedy policy with terminal cost $\tilde J$, so $(T_{\hat{\mu}} \tilde{J}) = (T \tilde{J}).$ Let $c = \Vert \tilde{J} - J^* \Vert_{\mc{S}^{*}}$. Then 
\begin{align*}
&\tilde{J}({\bf z}) - c \leq J^*({\bf z}) \leq \tilde{J}({\bf z}) + c ~~~\forall ~~{\bf z} \in {\mc{S}^*} \\
&\tilde{J}({\bf z}) - c \leq J^*({\bf z}) \leq \tilde{J}({\bf z}) + c ~~~\forall ~~{\bf z} \in \mc{S}_{\hat{\mu}}
\end{align*}
The second set of inequalities hold because $\mc{S}_{\hat{\mu}} \subseteq \mc{S}^{*}$. For  
\begin{align*}
(T(\tilde{J} - c))({\bf z}) &= \min_{{\bf u}} [g({\bf z},{\bf u})+\alpha (\tilde{J}({\bf z}^+) - c)]\\
&= (T \tilde{J})({\bf z}) - \alpha c.
\end{align*}
Similarly $(T(\tilde{J} +c))({\bf z}) = (T \tilde{J})({\bf z}) + \alpha c$. Using the monotonicity property of $T$, 
\begin{align*}
T\tilde{J}({\bf z}) - \alpha c \leq (TJ)^*({\bf z}) \leq T\tilde{J}({\bf z}) + \alpha c ~~~\forall ~~{\bf z} \in \mc{S}_{\hat{\mu}},
\end{align*}
which implies that 
\begin{align}\label{eq:Tmu-T}
\Vert T_{\hat{\mu}} \tilde{J} - T J^*\Vert_{\mc{S}_{\hat{\mu}}} \leq \alpha c = \alpha \Vert \tilde J - J^* \Vert_{\mc{S}^*}
\end{align}

Next we analyze the performance of any approximate policy by comparing it with the global optimal policy. We will derive bound for maximum error between the optimal and a suboptimal policy. 

\begin{thm} \label{prop:ErrorI}
{\emph{Let $H$ be a symmetric and positive semidefinite matrix such that} $\tilde{J}({\bf z}) ={\bf z}\tr H {\bf z}$ \emph{is the approximate cost to go in (\ref{eq:approxProb}). Let $$\epsilon = \Vert \tilde{J} - J^* \Vert_{\mc{S}^*},$$ and $J^{\hat{\mu}} = \lim_{k\to \infty} T_{\hat{\mu}}^k \tilde{J}$. Then the maximum error between the global optimal solution and the approximate solution is}
\begin{equation}\label{eq:maxErrorBound}
\Vert J^{\hat{\mu}} - J^* \Vert_{\mc{S}_{\hat{\mu}}} \leq \frac{2\alpha \epsilon}{1-\alpha}
\end{equation}

}
\end{thm}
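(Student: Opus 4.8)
\emph{Proof strategy.} The plan is to run the standard limited-lookahead error argument of \cite{Bertsekas}, built around the contraction modulus $\alpha$ of the single-policy mapping $T_{\hat{\mu}}$, the Bellman identity $TJ^* = J^*$, and the inequality already derived in (\ref{eq:Tmu-T}). The first observation that simplifies everything is that Problem $\mc{P}$ (and its one-step-lookahead relaxation (\ref{eq:approxProb})) is a \emph{minimization}, so the greedy policy $\hat{\bs{\mu}}$ can never do better than the optimum: $J^{\hat{\mu}}({\bf z}) \geq J^*({\bf z})$ for all ${\bf z}\in\mc{S}_{\hat{\mu}}$. Hence $\Vert J^{\hat{\mu}} - J^*\Vert_{\mc{S}_{\hat{\mu}}} = \max_{{\bf z}\in\mc{S}_{\hat{\mu}}}\big(J^{\hat{\mu}}({\bf z}) - J^*({\bf z})\big)$, and it suffices to produce a one-sided upper bound $J^{\hat{\mu}} \leq J^* + \tfrac{2\alpha\epsilon}{1-\alpha}$ on $\mc{S}_{\hat{\mu}}$.

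The key intermediate step I would prove is the one-step estimate $T_{\hat{\mu}}J^* \leq J^* + 2\alpha\epsilon$ pointwise on $\mc{S}_{\hat{\mu}}$. Since $\hat{\bs{\mu}}$ is the greedy policy associated with terminal cost $\tilde J$ we have $T_{\hat{\mu}}\tilde J = T\tilde J$; combining this with $TJ^* = J^*$ and (\ref{eq:Tmu-T}) gives $\Vert T_{\hat{\mu}}\tilde J - J^*\Vert_{\mc{S}_{\hat{\mu}}} \leq \alpha\epsilon$. Because $T_{\hat{\mu}}$ enters its argument only through the discounted terminal term $\alpha J(\cdot)$ evaluated at a point of $\mc{S}_{\hat{\mu}}\subseteq\mc{S}^*$, replacing $\tilde J$ by $J^*$ changes $T_{\hat{\mu}}$ by at most $\alpha\Vert \tilde J - J^*\Vert_{\mc{S}^*} = \alpha\epsilon$, i.e. $\Vert T_{\hat{\mu}}J^* - T_{\hat{\mu}}\tilde J\Vert_{\mc{S}_{\hat{\mu}}}\leq \alpha\epsilon$. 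The triangle inequality then yields $T_{\hat{\mu}}J^* \leq J^* + 2\alpha\epsilon$ on $\mc{S}_{\hat{\mu}}$.

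I would then iterate, using the monotonicity lemma and the elementary identity $T_{\hat{\mu}}(J + c) = T_{\hat{\mu}}J + \alpha c$ for a constant $c \geq 0$: applying $T_{\hat{\mu}}$ repeatedly to $T_{\hat{\mu}}J^* \leq J^* + 2\alpha\epsilon$ and inducting on $k$ gives, on $\mc{S}_{\hat{\mu}}$ (which is invariant under $\hat{\bs{\mu}}$, so every iterate stays there),
\begin{equation*}
T_{\hat{\mu}}^{k} J^* \leq J^* + 2\alpha\epsilon\,(1 + \alpha + \cdots + \alpha^{k-1}) = J^* + 2\alpha\epsilon\,\frac{1-\alpha^{k}}{1-\alpha}.
\end{equation*}
Since $T_{\hat{\mu}}$ is a contraction of modulus $\alpha < 1$ on bounded functions over the bounded invariant set $\mc{S}_{\hat{\mu}}$ (Lem. \ref{prop:stability} guarantees such a set exists), it has a unique fixed point and $T_{\hat{\mu}}^k$ converges to it from any bounded starting function; in particular $\lim_{k\to\infty}T_{\hat{\mu}}^k J^* = \lim_{k\to\infty}T_{\hat{\mu}}^k \tilde J = J^{\hat{\mu}}$. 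Letting $k\to\infty$ above gives $J^{\hat{\mu}} \leq J^* + \tfrac{2\alpha\epsilon}{1-\alpha}$ on $\mc{S}_{\hat{\mu}}$, which together with $J^{\hat{\mu}}\geq J^*$ establishes (\ref{eq:maxErrorBound}).

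I expect the main obstacle to be bookkeeping on the nested domains $\mc{S}_{\hat{\mu}}\subseteq\mc{S}^*$: one must verify that each use of the monotonicity lemma and of (\ref{eq:Tmu-T}) is legitimate on the stated set (the set $\mc{S}^*$ was introduced precisely so that $\tilde J - J^*$ is comparable along the mixed trajectories that arise when $T$ and $T_{\hat{\mu}}$ are interleaved), and that the contraction/fixed-point argument correctly identifies $\lim_k T_{\hat{\mu}}^k J^*$ with the $J^{\hat{\mu}}$ of the statement. Once these domain issues are handled, the remainder is the routine geometric-series telescoping and carries no analytic difficulty.
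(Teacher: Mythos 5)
Your proof is correct, and it reaches the same constant $\tfrac{2\alpha\epsilon}{1-\alpha}$ from the same two ingredients (the contraction modulus $\alpha$ of $T_{\hat{\mu}}$ and inequality (\ref{eq:Tmu-T})), but by a genuinely different route. The paper works directly with the two fixed points: it writes $\Vert J^{\hat{\mu}} - J^*\Vert_{\mc{S}_{\hat{\mu}}} = \Vert T_{\hat{\mu}}J^{\hat{\mu}} - TJ^*\Vert_{\mc{S}_{\hat{\mu}}}$, inserts $T_{\hat{\mu}}\tilde{J}$ as a pivot, applies the contraction property and (\ref{eq:Tmu-T}) to obtain the self-referential inequality $\Vert J^{\hat{\mu}} - J^*\Vert_{\mc{S}_{\hat{\mu}}} \leq \alpha\Vert J^{\hat{\mu}} - J^*\Vert_{\mc{S}_{\hat{\mu}}} + 2\alpha\epsilon$, and solves for the norm. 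You instead establish the one-step estimate $T_{\hat{\mu}}J^* \leq J^* + 2\alpha\epsilon$ on $\mc{S}_{\hat{\mu}}$ and propagate it by monotonicity and the identity $T_{\hat{\mu}}(J+c) = T_{\hat{\mu}}J + \alpha c$, summing the resulting geometric series; your closing step $\lim_k T_{\hat{\mu}}^k J^* = \lim_k T_{\hat{\mu}}^k\tilde{J} = J^{\hat{\mu}}$ is legitimate because $T_{\hat{\mu}}$ is a contraction on the bounded invariant set. The two arguments are essentially the unrolled and closed-form versions of the same recursion, so neither is stronger; your version buys a slightly more transparent accounting of where each factor of $\alpha\epsilon$ enters per stage, at the cost of needing the auxiliary observation $J^{\hat{\mu}} \geq J^*$ (true here, since $J^*$ minimizes over all policies) to convert your one-sided bound into the sup-norm statement — the paper avoids that by bounding the absolute difference directly. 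Your attention to the domain bookkeeping ($\mc{S}_{\hat{\mu}} \subseteq \mc{S}^*$ and invariance of $\mc{S}_{\hat{\mu}}$ under $\hat{\bs{\mu}}$) matches where the paper's sets $\mc{S}_{\hat{\mu}}$ and $\mc{S}^*$ are actually used.
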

\vspace{0.05in}
\begin{proof}
The proof of this theorem is based on the properties of the mappings defined in Eq. (\ref{eq:Map_T_Tmu}). 
\begin{align*}
\Vert J^{\hat{\mu}} - J^* \Vert_{\mc{S}_{\hat{\mu}}} &=   \Vert T_{\hat{\mu}}J^{\hat{\mu}} - J^* \Vert_{\mc{S}_{\hat{\mu}}}  \nonumber\\
&\leq \Vert T_{\hat{\mu}}J^{\hat{\mu}} - T_{\hat{\mu}}\tilde{J} \Vert_{\mc{S}_{\hat{\mu}}} + \Vert T_{\hat{\mu}}\tilde{J}- TJ^* \Vert_{\mc{S}_{\hat{\mu}}}\nonumber \\
&\leq \alpha \Vert J^{\hat{\mu}} - \tilde{J} \Vert_{\mc{S}_{\hat{\mu}}} + \alpha \Vert \tilde{J}- J^* \Vert_{\mc{S}^*}\nonumber \\
&\leq \alpha \Vert J^{\hat{\mu}} - J^* \Vert_{\mc{S}_{\hat{\mu}}}+\alpha \Vert J^* - \tilde{J} \Vert_{\mc{S}_{\hat{\mu}}} \nonumber \\
&+ \alpha \Vert \tilde{J}- J^* \Vert_{\mc{S}^*} \\ \nonumber
&\leq \alpha \Vert J^{\hat{\mu}} - J^* \Vert_{\mc{S}_{\hat{\mu}}}+ 2\alpha \epsilon,\nonumber
\end{align*}
which concludes the proof. Here we have used the fact the both $T$ and $T_{\hat{\mu}}$ are contractions and the result proved in Eq. (\ref{eq:Tmu-T}).
\end{proof}

One important advantage of using EAPs I $\&$ II is that they simplify the computation of these error bounds. For any general suboptimal cost $\tilde{J}$, it is not straightforward to compute $\epsilon$. However for both EAPs I $\&$ II, their corresponding values of $\epsilon$ can easily be computed as follows: 
\begin{align*}
\epsilon^{\text{I}} = \Vert \tilde{J}^{\text{I}} - J^* \Vert_{\mc{S}^*} = \max_{{\bf z}\in \mc{S}^*} |{\bf z}\tr K_{\mc{L}^c}^{\text{I}} {\bf z}|, \\
\epsilon^{\text{II}} = \Vert \tilde{J}^{\text{II}} - J^* \Vert_{\mc{S}^*} = \max_{{\bf z}\in \mc{S}^*} |{\bf z}\tr K_{\mc{L}^c}^{\text{II}} {\bf z}|
\end{align*}
In fact, we can derive tight upper bounds for both $\epsilon^{\text{I}} $ and $\epsilon^{\text{II}} $. Let 
\begin{align*}
\zeta_i &= \sum_{j\notin \mc{N}_i} K_{ij}, \text{ and }\\
\zeta_{\max} &=\max\limits_{i =\{1,2,\ldots,N+2\}} \zeta_i. 
\end{align*}
Then from Gershgorin circle theorem, 
\begin{align*}
\epsilon^{\text{I}}  &\leq \zeta_{\max} \max_{{\bf z}\in \mc{S}^*} \Vert {\bf z} \Vert ^2, \nonumber\\
\epsilon^{\text{II}}  &\leq 2\zeta_{\max} \max_{{\bf z}\in \mc{S}^*} \Vert {\bf z} \Vert ^2.
\end{align*}
Here $\zeta_i$ is the sum of the weights assigned to the links between node $i$ and its non-neighboring nodes in the global optimal cost to go.

\subsection{Decentralized Energy Aware Policy}
EAPs I $\&$ II are distributed because each node is able to compute its control action by communicating with its neighbors only. The communication overhead is small especially in the context of this problem setup in which each agent has at most two neighbors. However, in networks with dense deployment of nodes, even this communication can cause significant energy consumption, and can result in channel congestion if all the nodes transmit simultaneously. To prevent congestion, nodes need to come up with some scheduling scheme. However, any scheduling scheme will have its own cost and will introduce latency in the system. Therefore, it is desirable to have a coordination policy that requires no inter-agent communication and can still provide some performance guarantees. We call such a policy a decentralized policy. Inter-agent communication can be avoided if the nodes are equipped with sensors that can sense the required information of the neighbors. In the absence of such sensors, a decentralized coordination policy should only require a node to communicate with its neighbors once to get their current state information. Next we propose a simple decentralized scheme that satisfies these requirements and can be implemented efficiently. \\

\hspace{-0.15in}\emph{Energy Aware Policy III} \vspace{0.05in}\\
In EAP III, at time $k$, node $i$ assumes that $u_i(t) = 0$ for all $t\geq k+1$ and $u_j(t) = 0$ for all $t\geq k$ where $j \in \mc{N}_i$. Therefore, the total cost of the system is $J({\bf z}_k,{\bf u}_k) = \sum_{i = 1}^N J_i(z_i(k), {\bf z}_{-i,k},u_i(k))$, where 
\vspace{0.1in}
\begin{align*}
J_i(z_i(k), {\bf z}_{-i,k},u_i(k)) = \frac{1}{2}  \sum_{j \in \mc{N}_i}  \kappa_C\Vert z_i(k) - z_j(k)\Vert^2 + \kappa_M\Vert u_i(k) \Vert^2
+ \frac{1}{2}\sum_{t = 1}^{\infty}\alpha^t \sum_{j \in \mc{N}_i}\kappa_C \Vert z_i(k+1) - z_j(k)\Vert^2.
\end{align*}
\vspace{0.1in} Since the cost is convex, the optimal control $u_i(k)$ can easily be computed via first order necessary condition, i.e., $\frac{\partial J}{\partial u_i(k)} = 0$, which yields 
\begin{equation*}
\mu_i^{\text{III}} (z_i(k),{\bf z}_{-i,k})= \frac{-1}{\frac{2(1-\alpha)}{\alpha}\frac{\kappa_M}{\kappa_C} + |\mc{N}_i|} \sum_{j \in \mc{N}_i}(z_i(k) - z_j(k)).
\end{equation*}
This means that each agent will have consensus dynamics. The standard form of the problem under EAP III is 
\begin{align*}
\bs{\mu}^{\text{III}}({\bf z}_k) &= \argmin_{{\bf u}}  g({\bf z}_k, {\bf u})+ \alpha \tilde{J}^{\text{III}}({\bf z}_k,{\bf u}),\nonumber \\
\text{s.t.}~~~~ &{\bf z}_{k+1} = {\bf z}_k + B{\bf u} .
\end{align*}
The stage and the terminal costs are
\begin{align}\label{eq:ApproxCostIII}
g({\bf z}_k,{\bf u}) &= {\bf z}_k\tr Q {\bf z}_k + {\bf u}\tr R {\bf u}, \nonumber\\
\tilde{J}^{\text{III}}({\bf z}_k,{\bf u}) &= {\bf z}_k\tr \tilde{Q} {\bf z}_k + {\bf u}\tr \tilde{R} {\bf u} + {\bf u}\tr B\tr \tilde{L} {\bf z}_k,
\end{align}
$\tilde{Q} = c_1 \mc{L}'$, $\tilde{R} =  c_1  (R' \otimes I_2)$, and $\tilde{L} = 2c_1 \mc{L}'$. Here $\mc{L}' = \mc{L}\otimes I_2$, $R'$ is an $N \times N$ diagonal matrix with $R'_{ii} = |\mc{N}_i|$ for $i \in \{1,2,\ldots,N\}$, and the constant $c_1$ is  
\begin{align*}
c_1 = \frac{\alpha }{2(1-\alpha)}\kappa_C.
\end{align*}
The resulting optimal policy in vector form is 
\begin{align*}
\bs{\mu} ^{\text{III}}({\bf z}_k)&= -\hat{L} {\bf z}_k.\\
\hat{L} &= \frac{1}{2} (R + \tilde{R})^{-1} B\tr \tilde{L}\\
&=D'B\tr \mc{L}'.
\end{align*}
In the above expression, $D' = c_1 (R + \tilde{R})^{-1}$. We can also express $D'$ as $D' = (D \otimes I_2)$  where $D$ is a diagonal matrix 
with entries
\begin{align*}
D_{ii} = \frac{1}{\frac{2(1-\alpha)}{\alpha}\frac{\kappa_M}{\kappa_C} + |\mc{N}_i|} 
\end{align*}
for all $i \in \{1,2,\ldots,N\}$. 
 \vspace{0.1in}
\begin{lem}
\emph{EAP III results in a marginally stable system dynamics}
\end{lem}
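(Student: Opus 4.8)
Since the approximate terminal cost of EAP~III involves ${\bf u}$, Lemma~\ref{prop:stability} does not apply verbatim, so the plan is to analyze directly the closed-loop matrix $M = I_{2(N+m)} - B\hat{L}$, where $\hat{L} = D'B\tr\mc{L}'$ is the gain computed above, mirroring the block decomposition used in the proof of Lemma~\ref{prop:stability}. First I would use the structure of $B$ to write $BD'B\tr = \mathrm{diag}(D',\,0_{2m\times 2m})$, the $2(N+m)\times 2(N+m)$ matrix whose leading $2N\times 2N$ block is $D'$; then, partitioning $\mc{L}' = \mc{L}\otimes I_2$ as in (\ref{eq:partitionMatrix}),
\[
M = \left[\begin{array}{c|c} I_{2N} - D'\mc{L}'_f & -D'\mc{L}'_{fl} \\ \hline 0_{2m\times 2N} & I_{2m} \end{array}\right],
\]
where $\mc{L}'_f = \mc{L}_f\otimes I_2$ and $\mc{L}_f$ is the principal submatrix of $\mc{L}$ indexed by the relay nodes, i.e.\ the grounded Laplacian obtained by deleting the base-station rows and columns. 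Because $M$ is block upper triangular, its characteristic polynomial is the product of that of $I_{2N}-D'\mc{L}'_f$ with $(\lambda-1)^{2m}$, so $\text{spec}(M)$ is the spectrum of $I_{2N}-D'\mc{L}'_f$ together with the value $1$ repeated $2m$ times; it then suffices to locate $\text{spec}(I_{2N}-D'\mc{L}'_f)$ and to exhibit $2(N+m)$ independent eigenvectors of $M$.

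To bound the leading block, I would observe that $D'$ is diagonal and positive definite, so $I_{2N}-D'\mc{L}'_f$ is similar, via $(D')^{1/2}$, to the symmetric matrix $I_{2N}-(D')^{1/2}\mc{L}'_f(D')^{1/2}$; hence it is diagonalizable with real eigenvalues, all at most $1$ since $\mc{L}_f\succeq 0$. As $G_c$ is connected and contains the base stations, $\mc{L}_f$ is nonsingular, so the eigenvalues of $D'\mc{L}'_f$ are strictly positive and those of $I_{2N}-D'\mc{L}'_f$ lie strictly below $1$. For the lower bound I would apply the Gershgorin circle theorem to $D\mc{L}_f$: its $i$th diagonal entry equals $D_{ii}|\mc{N}_i|$ and its $i$th Gershgorin radius is at most $D_{ii}|\mc{N}_i|$, because the nonzero off-diagonal entries of row $i$ all equal $D_{ii}$ and there are at most $|\mc{N}_i|$ of them. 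Thus every eigenvalue of $D\mc{L}_f$ is bounded above by $2D_{ii}|\mc{N}_i|$, and since $D_{ii} = \left(\tfrac{2(1-\alpha)}{\alpha}\tfrac{\kappa_M}{\kappa_C}+|\mc{N}_i|\right)^{-1} < |\mc{N}_i|^{-1}$, this bound is strictly less than $2$. Hence $\text{spec}(I_{2N}-D'\mc{L}'_f)\subset(-1,1)$, the only eigenvalue of $M$ of modulus one is $1$, and it has algebraic multiplicity exactly $2m$.

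Finally I would check that the eigenvalue $1$ is semisimple. The $2N$ eigenvectors of $I_{2N}-D'\mc{L}'_f$, which form a basis of $\field{R}^{2N}$ by diagonalizability, lift to eigenvectors $[\,w\tr~0\tr\,]\tr$ of $M$ with eigenvalues in $(-1,1)$. Solving $(M-I_{2(N+m)})v=0$ reduces, using invertibility of $D'$ and of $\mc{L}'_f$, to $v_f = -(\mc{L}'_f)^{-1}\mc{L}'_{fl}v_l$; letting $v_l$ range over a basis of $\field{R}^{2m}$ then gives $2m$ further eigenvectors, and the two families are jointly independent because their bottom $2m$ blocks are. This yields $2(N+m)$ independent eigenvectors with $\rho(M)=1$ and all modulus-one eigenvalues semisimple, i.e.\ marginal stability. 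I expect the only delicate steps to be the nonsingularity of $\mc{L}_f$ (which rules out a defective eigenvalue at $1$ and encodes that the relay subgraph is anchored to the base stations) and the Gershgorin estimate $2D_{ii}|\mc{N}_i|<2$; the remainder follows the template of Lemma~\ref{prop:stability}.
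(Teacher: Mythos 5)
Your proof is correct and follows essentially the same route as the paper's: the same block-triangular decomposition of $I_{2(N+m)}-B\hat{L}$, the same Gershgorin estimate on $D\mc{L}_f$ placing its spectrum in $[0,2)$, and a count of $2(N+m)$ independent eigenvectors. Your two refinements are welcome but do not change the approach: the similarity via $(D')^{1/2}$ is the correct way to justify diagonalizability of the non-symmetric block $I_{2N}-D'\mc{L}'_f$ (the paper appeals somewhat loosely to the orthogonality of the eigenvectors of $\mc{L}'$ itself), and your explicit solution of $(M-I_{2(N+m)})v=0$, using nonsingularity of the grounded Laplacian $\mc{L}_f$, makes the $2m$ eigenvectors at the unit eigenvalue concrete rather than asserted.
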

\begin{proof}
To show that the system dynamics are stable, we need to show that the eigenvalues of $I_{2(N+m)} - B\hat{L}$ lie within the unit circle and its eigenvectors are independent. Using the matrix partitioning in Eq. (\ref{eq:partitionMatrix}), the matrix $B\tr \mc{L}' \in \field{R}^{2N \times 2(N+m)}$ can be partitioned into $[\mc{L}'_f|\mc{L}'_{lf}]$ and 
\[
I_{2(N+m)} - B\hat{L}=
\left[
\begin{array}{c|c}
I_{2N} - D'\mc{L}'_f& -D' \mc{L}'_{lf}\\
\hline
0_{2m\times 2N} &  I_{2m\times 2m}
\end{array}
\right].
\]
From the properties of block matrices, $I_{2(N+m)} - B\hat{L}$ has $2m$ eigenvalues equal to one and the remaining $2N$ eigenvalues are the eigenvalues of  $I_{2N} - D'\mc{L}'_f$. The Gershgorin circles of $D'\mc{L}'_f$ are the same as those of $D\mc{L}_f$ repeated twice. For $i \in \{1,2,\ldots,N\}$, 
\begin{align*}
c_i(D\mc{L}_f)=|\mc{N}_i |D_{ii} <1~~~~ r_i(D\mc{L}_f)\leq c_i(D\mc{L}).
\end{align*}
This implies that $\lambda_i(D' \mc{L}_f') <2$ and $\lambda_i(I_{2N} - \lambda_i(D' \mc{L}_f')) <1$ for all $i$. Thus, all the eigenvalues of $I_{2(N+m)} - B\hat{L}$ lie within the unit circle. To show that all the eigenvectors are independent, $\mc{L}' = (\mc{L} \otimes I_2)$ is a laplacian matrix of an undirected graph, i.e., it is symmetric and positive semidefinite and all of its $2N$ eigenvectors are orthogonal. The remaining $2m$ eigenvectors are those of $I_{2m \times 2m}$, which are independent as well. 
%
\end{proof}

\begin{prop}\label{prop:ErrorIII}
\emph{Let the approximate cost to go }$\tilde{J}^{\text{III}}$ \emph{be as defined in Eq. (\ref{eq:ApproxCostIII}). Let} $\epsilon^{\text{III}} = \Vert \tilde{J}^{\text{III}} - J^* \Vert_{\mc{S}^*}$ \emph{and} $J^{\bs{\mu}^{\text{III}}} = \lim_{k\to \infty} T_{\bs{\mu}^{\text{III}}}^k \tilde{J}^{\text{III}}$. \emph{Then the maximum error between the global optimal solution and the approximate solution is}
\begin{equation}\label{eq:maxErrorBound}
\Vert J^{\bs{\mu}^{\text{III}}} - J^* \Vert_{\mc{S}_{\hat{\mu}}} \leq \frac{2\alpha \epsilon^{\text{III}}}{1-\alpha}
\end{equation}
\end{prop}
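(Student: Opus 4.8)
The plan is to run the policy--improvement--error argument that underlies Theorem~\ref{prop:ErrorI}, reading $\tilde J^{\text{III}}$ of~(\ref{eq:ApproxCostIII}) as a \emph{surrogate for the optimal cost--to--go at the successor state}: along a transition ${\bf z}^+={\bf z}+B{\bf u}$, the number $\tilde J^{\text{III}}({\bf z},{\bf u})$ stands in for $J^*({\bf z}^+)={\bf z}^+{}\tr K{\bf z}^+$, and $\epsilon^{\text{III}}=\Vert\tilde J^{\text{III}}-J^*\Vert_{\mc S^*}$ is the worst--case mismatch $|\tilde J^{\text{III}}({\bf z},{\bf u})-J^*({\bf z}+B{\bf u})|$ over transitions occurring inside the invariant set. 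Under this reading $\bs\mu^{\text{III}}$ is precisely the one--step greedy action for the surrogate, and the chain of inequalities from Theorem~\ref{prop:ErrorI} carries over; only the stability input and the bookkeeping around the control--dependent surrogate need separate attention.

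Concretely, I would first set the stage exactly as before Theorem~\ref{prop:ErrorI}, using the stability lemma just proved for EAP~III: $I_{2(N+m)}-B\hat L$ is marginally stable with a full eigenbasis, so every trajectory stays in a bounded invariant set $\mc S_{\bs\mu^{\text{III}}}$; let $\mc S^*$ be the associated minimal superset closed under one step of $\bs\mu^*$, so all the max norms are finite, $J^{\bs\mu^{\text{III}}}=\lim_k T_{\bs\mu^{\text{III}}}^k\tilde J^{\text{III}}$ is the fixed point $J^{\bs\mu^{\text{III}}}=T_{\bs\mu^{\text{III}}}J^{\bs\mu^{\text{III}}}$, and (as in \cite{Bertsekas}) $T$ and $T_{\bs\mu^{\text{III}}}$ are $\alpha$--contractions in the max norm. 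Next I would prove the analogue of Eq.~(\ref{eq:Tmu-T}), namely that $\bs\mu^{\text{III}}$ is $2\alpha\epsilon^{\text{III}}$--greedy for $J^*$: starting from $T_{\bs\mu^{\text{III}}}J^*({\bf z})=g({\bf z},\bs\mu^{\text{III}}({\bf z}))+\alpha J^*({\bf z}+B\bs\mu^{\text{III}}({\bf z}))$, replace $J^*({\bf z}^+)$ by $\tilde J^{\text{III}}$ (cost $\alpha\epsilon^{\text{III}}$), use that the surrogate--greedy value is no larger than the surrogate value at the true optimal action $\bs\mu^*({\bf z})$, replace $\tilde J^{\text{III}}$ back by $J^*({\bf z}^+)$ (another $\alpha\epsilon^{\text{III}}$), and note $T_{\bs\mu^{\text{III}}}J^*\ge TJ^*$ always; this gives $\Vert T_{\bs\mu^{\text{III}}}J^*-TJ^*\Vert_{\mc S_{\bs\mu^{\text{III}}}}\le 2\alpha\epsilon^{\text{III}}$. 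Finally, $J^{\bs\mu^{\text{III}}}-J^*=T_{\bs\mu^{\text{III}}}J^{\bs\mu^{\text{III}}}-TJ^*$ gives, by one triangle inequality and the contraction property, $\Vert J^{\bs\mu^{\text{III}}}-J^*\Vert_{\mc S_{\bs\mu^{\text{III}}}}\le\alpha\Vert J^{\bs\mu^{\text{III}}}-J^*\Vert_{\mc S_{\bs\mu^{\text{III}}}}+2\alpha\epsilon^{\text{III}}$, and rearranging yields the bound $\tfrac{2\alpha\epsilon^{\text{III}}}{1-\alpha}$.

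The point requiring care --- the only genuine departure from Theorem~\ref{prop:ErrorI} --- is that $\tilde J^{\text{III}}$ in~(\ref{eq:ApproxCostIII}) is \emph{not} of the pure quadratic--in--state form ${\bf z}\tr H{\bf z}$: it depends explicitly on the control through ${\bf u}\tr\tilde R{\bf u}$ and the cross term ${\bf u}\tr B\tr\tilde L{\bf z}$, and it cannot be rewritten as ${\bf z}^+{}\tr H{\bf z}^+$ for any fixed symmetric $H$ (that would force $\tilde R=B\tr\tilde Q B$, which fails since $\tilde R=c_1(R'\otimes I_2)$ is diagonal while $B\tr\tilde Q B=c_1\mc L'_f$ is not). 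Hence Lemma~\ref{prop:stability} cannot be quoted off the shelf, which is exactly why EAP~III carries its own stability lemma; and the surrogate must be handled as a function of the pair $({\bf z},{\bf u})$ throughout, so the one spot that needs a short, careful argument is the ``replace $J^*({\bf z}^+)$ by $\tilde J^{\text{III}}$'' step --- i.e.\ pinning down that $\Vert\tilde J^{\text{III}}-J^*\Vert_{\mc S^*}$ really does bound $|\tilde J^{\text{III}}({\bf z},{\bf u})-J^*({\bf z}+B{\bf u})|$ uniformly on the reachable transitions, so that both substitutions (in the greedy step for $\bs\mu^{\text{III}}$ and in the comparison step for $\bs\mu^*$) cost at most $\alpha\epsilon^{\text{III}}$. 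Once that interpretation is fixed, everything downstream is the contraction bookkeeping of Theorem~\ref{prop:ErrorI}, so I expect the effort to be in the write--up, not in any new estimate.
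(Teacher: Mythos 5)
Your proposal is correct and takes essentially the same route as the paper, whose proof of this proposition is literally the single sentence that it repeats the steps of Theorem~\ref{prop:ErrorI}; your chain (bound $\Vert T_{\bs{\mu}^{\text{III}}}J^*-TJ^*\Vert_{\mc{S}_{\hat{\mu}}}\leq 2\alpha\epsilon^{\text{III}}$ via the greedy property, then close the loop with the contraction of $T_{\bs{\mu}^{\text{III}}}$) is just a mild reshuffling of the same triangle inequalities. The one place you go beyond the paper --- observing that $\tilde{J}^{\text{III}}$ is a function of $({\bf z},{\bf u})$ that cannot be written as ${\bf z}^{+\text{{\tiny $T$}}}H{\bf z}^+$, so that $\epsilon^{\text{III}}$ must be read as a uniform bound on $|\tilde{J}^{\text{III}}({\bf z},{\bf u})-J^*({\bf z}+B{\bf u})|$ over the reachable transitions --- is a genuine gap in the paper's one-line proof that your write-up correctly identifies and fills.
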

\begin{proof}
The proof of this proposition consists of the same steps and reasoning as the proof of Thm. \ref{prop:ErrorI}. 
\end{proof}

\section{Simulation}\label{sec:simulation}
\begin{figure}[t]
\centering
\includegraphics[scale=0.43]{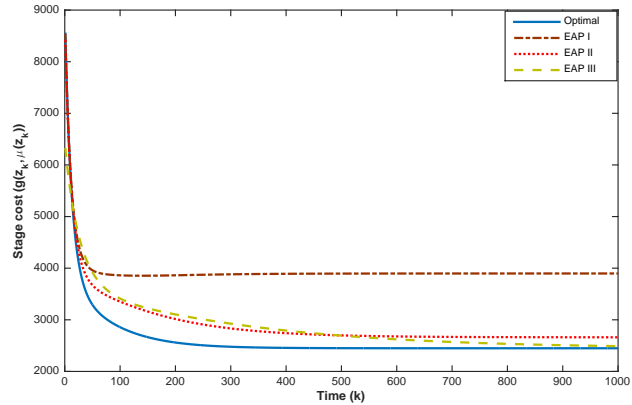}
\caption{Comparison of the stage costs of the proposed policies and the optimal policy. }
\label{fig:sim}
\end{figure}

In this section, we present the simulation results of the system under the proposed policies to verify their stability properties and to compare their performance with each other and with the global optimal policy. The details of the simulated system are as follows. The two base stations are separated by a distance $d = 100$ and are located at $[0 ~0]\tr$ and $[0 ~100]\tr$. The number of relay nodes is $N = 6$ and their initial deployment locations are ${\bf z}_0 = [{\bf x}\tr~ {\bf y}\tr]\tr$, where ${\bf x} = [5 ~12~ 26 ~30 ~33~ 38~ 0 ~100]\tr $ and ${\bf y} = [20 ~22~ 15~17~ 25~ 28~ 0~ 0]\tr$. The last two entries in ${\bf x}$ and ${\bf y}$ are the locations of the base stations. The values of the length of the communication interval, communication constant, mobility constant, and the discount factor are $T = 1000$, $\kappa_C = 1$, $\kappa_M = 1000$, and $\alpha = 0.95$ respectively. For EAPs I $\&$ II, the distributed optimization algorithm presented in Section \ref{sec:Energy-aware architectures} was implemented with number of iterations $\text{iter} = 200$ and a fixed step size $\gamma = 10^{-4}$. 

The simulation results are presented in Figs. \ref{fig:sim} and \ref{fig:trajectories}. In Fig. \ref{fig:sim}, a comparison of the costs incurred by the system under the optimal and the proposed policies is presented by plotting $g({\bf z}_k, \bs{\mu}({\bf z}_k))$ for $\bs{\mu}^*$, $\bs{\mu}^\text{I}$, $\bs{\mu}^\text{II}$, and $\bs{\mu}^\text{III}$ for all $k \in \{1,2,\ldots,T\}$. Let the total cost incurred by the system over the interval $[0,T]$ under policy $\bs{\mu}$ be $J^{\text{act}}_{\mu} = \sum_{k = 0}^T g({\bf z}_k, \bs{\mu}({\bf z}_k))$. Then for the simulated system $J^{\text{act}}_{\mu^*} = 2.62 \times 10^6$, $J^{\text{act}}_{\mu^\text{I}} =  3.94 \times 10^6$, $J^{\text{act}}_{\mu^\text{II}} = 2.91 \times 10^6$, and $J^{\text{act}}_{\mu^\text{III}} = 2.88 \times 10^6$. Based on this comparison for this particular system, the performance of EAPs II and III is close to each other and their difference from the optimal policy is small relative to EAP I. 
In Fig. \ref{fig:trajectories}, the trajectories of the relay nodes under the optimal and the proposed policies are presented. This figure provides a good insight into the performance of the proposed policies particularly EAP I. As mentioned in Sec. \ref{sec:Energy-aware architectures}, the approximate cost to go of EAP I assumes additional edges between each relay node and the origin. Consequently, the trajectories of all the relay nodes are biased towards the origin as compared to their optimal locations. This bias towards the origin plays a fundamental role in the poor performance of EAP I. A couple of interesting observations can be made by comparing Figs. \ref{subfig:Traj_EAPII} and \ref{subfig:Traj_EAPIII} with \ref{subfig:Traj_optimal}. Although the final locations of the relay nodes under EAP II are closer to the optimal terminal locations as compared to the final locations under EAP III, yet the total cost under EAP III is smaller as compared to EAP II. This observation reinforces the motivation of this work that reaching the same terminal set as the optimal solution under distributed setting may result in more energy consumption. Another interesting observation is that the relay nodes three and four first move away from their final locations and then reverse their directions. This behavior is justified because the nodes are minimizing the mobility and communication simultaneously. 

\begin{figure*}[t]
	\centering
		\subfigure[Optimal trajectories]
	{
		\includegraphics[trim = 0mm 0mm  0mm  0mm, clip, scale=0.4]{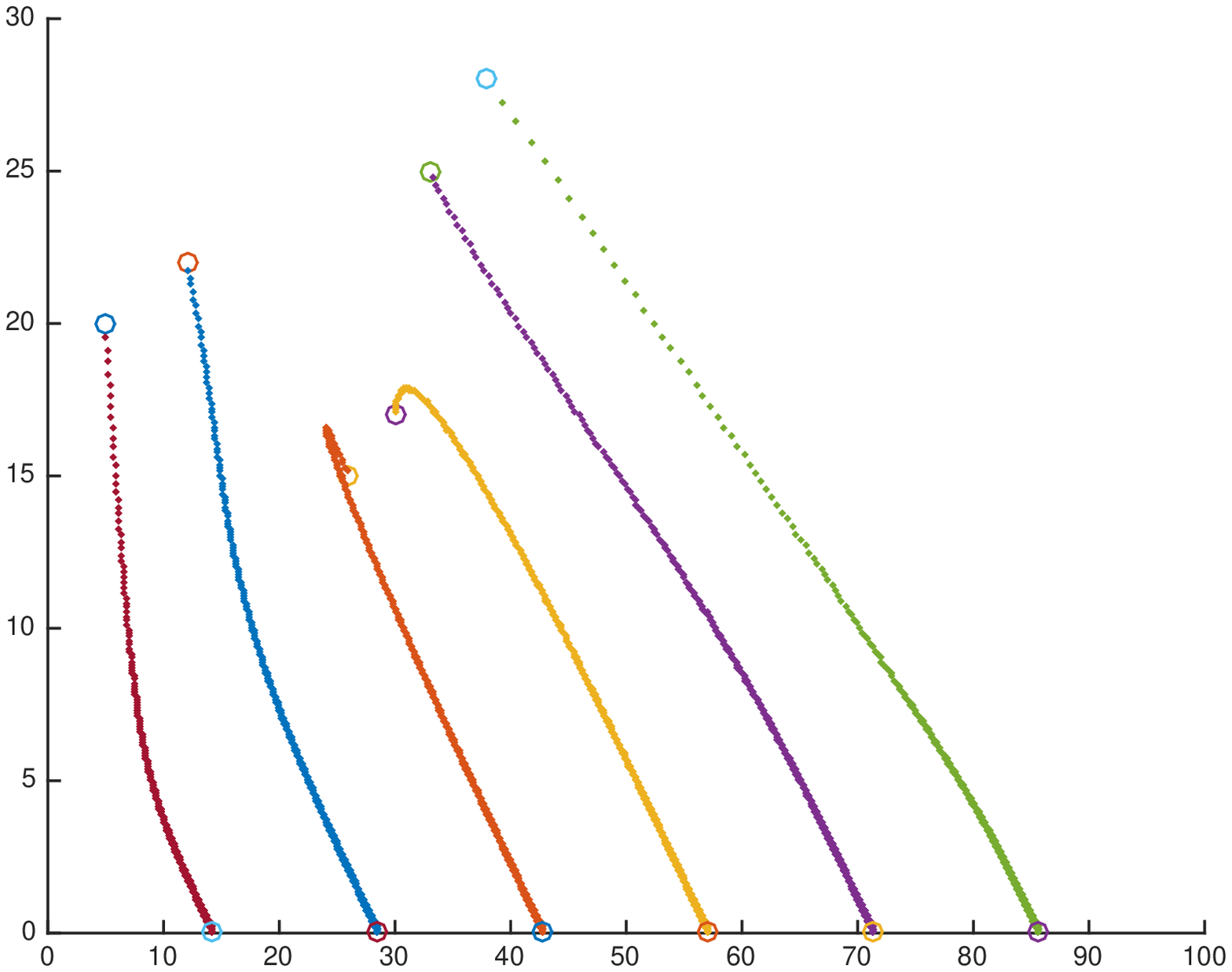}
		\label{subfig:Traj_optimal}
	}
	\subfigure[Trajectories under EAP I]
	{
		\includegraphics[trim = 0mm 0mm  0mm  0mm, clip, scale=0.4]{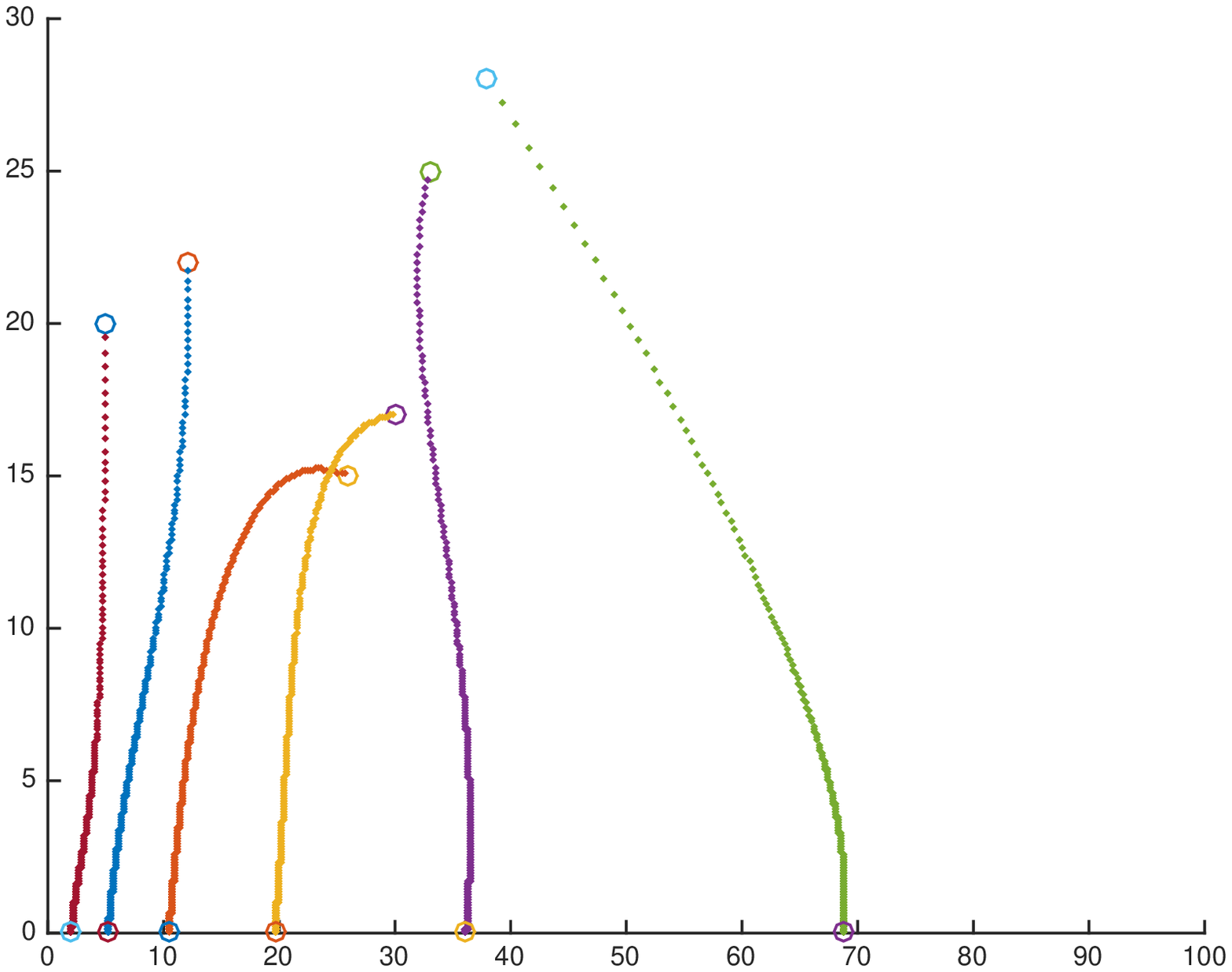}
		\label{subfig:Traj_EAPI}
	}
	\subfigure[Trajectories under EAP II]
	{
		\includegraphics[trim = 0mm 0mm  0mm  0mm, clip, scale=0.4]{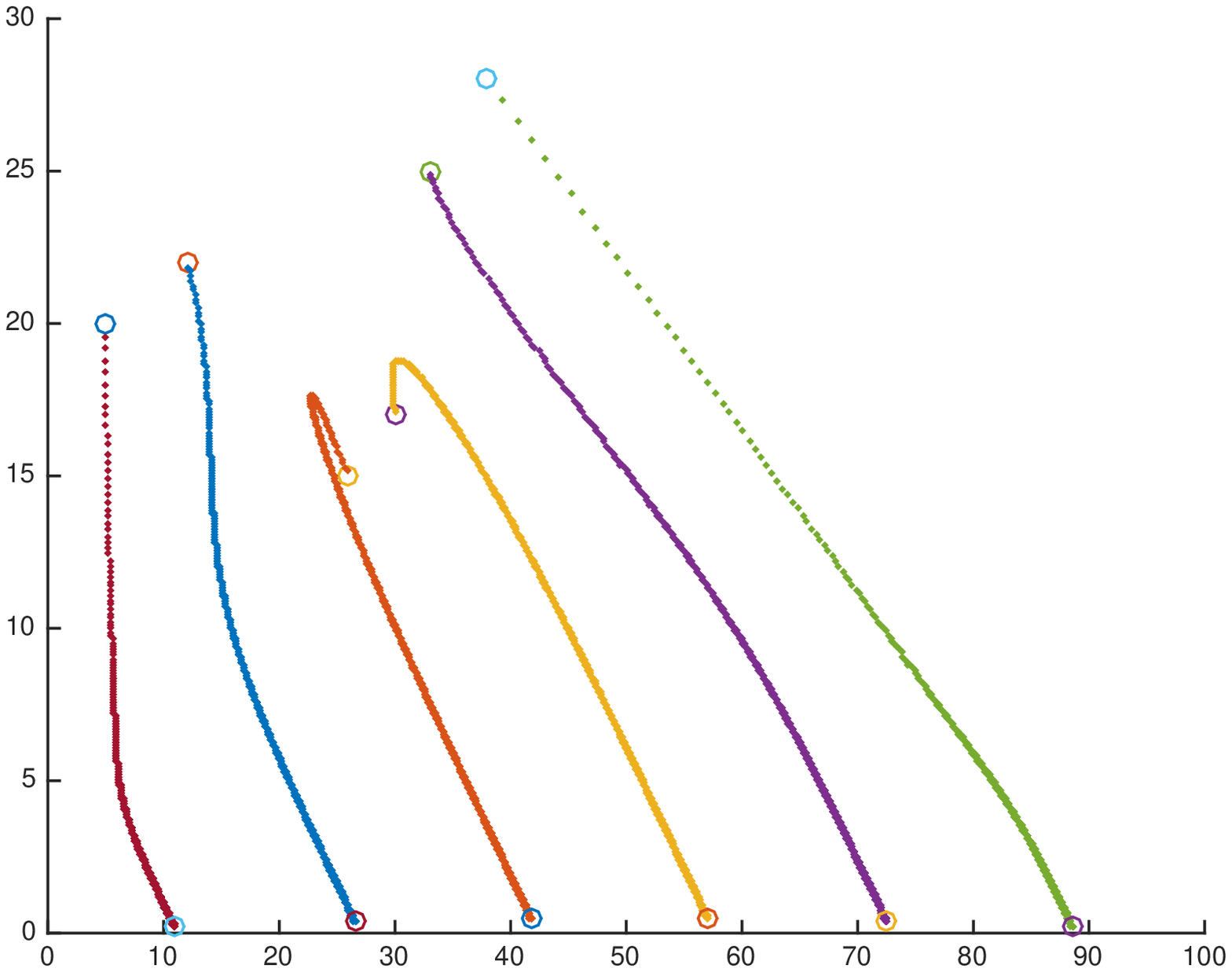}
		\label{subfig:Traj_EAPII}
	}
	\subfigure[Trajectories under EAP III]
	{
		\includegraphics[trim = 0mm 0mm  0mm  0mm, clip, scale=0.4]{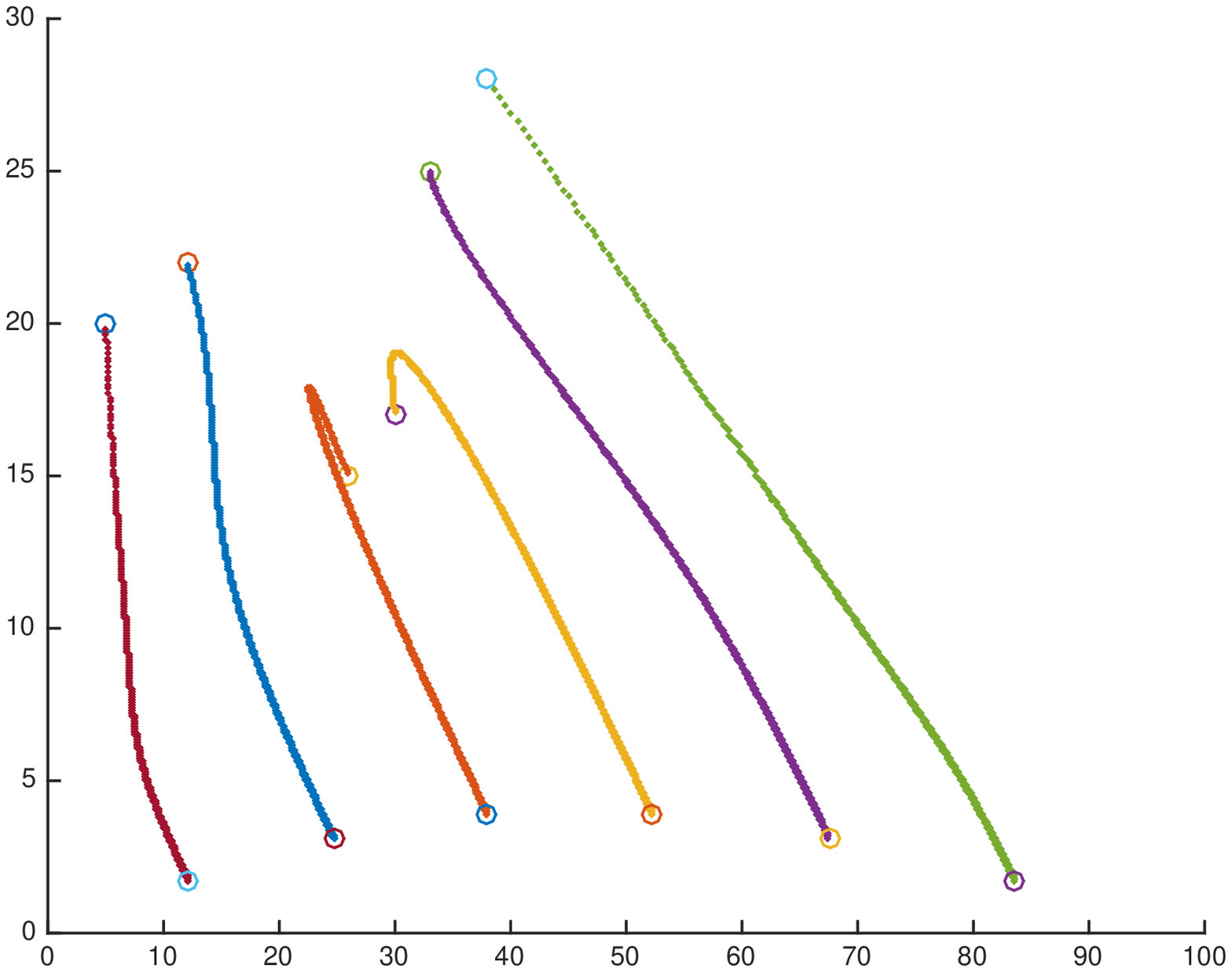}
		\label{subfig:Traj_EAPIII}
	}
	
	\caption{Trajectories of the mobile relay nodes under the optimal policy and the proposed policies EAPs I, II, $\&$ III are shown in figures (a), (b), (c), and (d) respectively. }
	\label{fig:trajectories}
\end{figure*}
\section{Conclusion}\label{sec:}
We proposed an energy aware architecture for multiagent systems that can strike a balance between the global performance of the system and the cost of achieving that performance in terms of communication energy. The proposed architecture is to formulate an  infinite horizon LQR problem as an equivalent one step lookahead problem with optimal cost to go as terminal cost that can be computed offline before the deployment of the system. It was shown that to compute this cost, each agent either had to communicate with all the agents in the network or communicate extensively with its immediate neighbors to compute the optimal control action, which resulted in excessive communication overhead.

\emph{To reduce this communication overhead, the main idea behind EAPs I $\&$ II was to impose the constraints of the communication network on the global optimal cost to go and use the constrained function as an approximate cost to go}. This allowed each node to compute its control action by solving a simple parameter optimization problem using any distributed optimization algorithm.  Each node only had to exchange its current estimates of its control values and the control values of its neighbors, with its neighbors instead of exchanging estimates of all the agents as required by the globally optimal solution.

In EAP III, each node computed its control action with the assumption that all of its neighbors remain stationary. This simplifying assumption decoupled the cost of each node along the entire trajectory and allowed nodes to compute their suboptimal control action efficiently. The only information required by a node in EAP III is the current state value of its neighbors which can be either sensed if the required sensors are available or communicated. 

We analyzed the performance of the proposed schemes and computed upper bounds for the performance gap between the optimal policy and the proposed policies. We also compared the performance of the proposed schemes through simulations, which showed that EAP III performed the best for the simulated system. 

This setup provides a solid framework for energy aware algorithms for multiagent systems with focus on designing local interactions laws for individual nodes that are efficient in terms of energy consumption, can be implemented in real time on nodes that have limited energy and computation resources, and can provide minimum performance guarantees. 

\begin{appendix} \label{sec:Appendix}
\emph{ Proposition 1:} \emph{If $Q \in \field{R}^{2(N+2)\times 2(N+2)}$ is a symmetric $M$ matrix}, 
$B = \left[  \begin{array}{c}
I_{2N} \\
\hline
0_{4 \times 2N}  
\end{array} \right]
$ and 
$R = \kappa_M I_{2N}$, 
\emph{then the positive definite solution of the Riccati equation}
\begin{equation*}
K =  \alpha K -  \alpha^2 KB( \alpha B\tr K B + R)^{-1} B\tr K + Q.
\end{equation*}
\emph{is also an $M$ matrix. Moreover, if $Q$ is a laplacian matrix of a connected graph then all the entries of $K$ are non-zero. }
\\
\begin{proof}
If $B$ = $I_{2(N+2)}$ and $\alpha = 1$, then this result has been proved in Theorem 5.1 of \cite{Cao}. We will extend this result for $B$ and $\alpha$ as defined above. 
We start the proof by representing the matrices $Q$ and $K$ as block matrices following the convention introduced in (\ref{eq:partitionMatrix}): \\
$
~~~~~~~~~~~~~~~~~~~~~~~~~~~~~~~~~~~~~~~~~~~~Q = \left[ \begin{array}{c|c}
Q_f & Q_{fl} \\
\hline 
Q_{fl}\tr&Q_l
\end{array}
\right]
$
and 
$
K= \left[ \begin{array}{c|c}
K_f & K_{fl} \\
\hline 
K_{fl}\tr&K_l
\end{array}
\right],
$
\\
Then,
\begin{align}\label{eq:RinKf}
Q &= (1-\alpha)K + \alpha^2 KB( \alpha B\tr K B + R)^{-1} B\tr K.\nonumber\\
B\tr Q B &= (1-\alpha)B\tr K B + \alpha^2 B\tr K B (I + \alpha B\tr K B R^{-1})^{-1} R^{-1} B\tr K B\nonumber\\
R^{-1}  Q_f &= (1-\alpha) R^{-1} K_f + \alpha^2 R^{-1}  K_f (I + \alpha K_f R^{-1} )^{-1} R^{-1} K_f
\end{align}
The last equality is based on matrix decomposition, $B\tr K B = K_f$ and $B\tr Q B = Q_f$ and multiplying both sides by $R^{-1} $. Because $R = \kappa_M I_{2N}$, it commutes with any square matrix. Using matrix inversion lemma 
\begin{align*}
R^{-1} Q_f &= R^{-1} K_f - \alpha(I + \alpha K_f R^{-1} )^{-1} R^{-1}  K_f\\
(R^{-1} K_f)(R^{-1} Q_f) &= (R^{-1} K_f)^2 - \alpha R^{-1} K_f(I + \alpha K_f R^{-1} )^{-1}  R^{-1}  K_f
\end{align*}
From Eq. (\ref{eq:RinKf}), $ \alpha^2 R^{-1}  K_f (I + \alpha K_f R^{-1} )^{-1}R^{-1} K_f  =R^{-1}  Q_f -  (1-\alpha) R^{-1} K_f$. Using this equality in the above equation yields the following quadratic equation.
\begin{align*}
(R^{-1} K_f)^2 - R^{-1}K_f\left(R^{-1}Q_f - \frac{1-\alpha}{\alpha} I \right) - \frac{1}{\alpha} R^{-1}Q_f = 0
\end{align*}
After performing a series of simple algebraic manipulations
\begin{equation*}
R^{-1}K_f = G + \sqrt{G + \frac{4}{\alpha}}\sqrt{G} - \frac{2(1-\sqrt{\alpha})}{\alpha} I_{2N},
\end{equation*}
where $G = R^{-1}Q_f + \frac{(1-\sqrt{\alpha})^2}{\alpha}$. Since $Q_f$ is a symmetric positive semidefinite $M$ matrix, $G$ is a positive definite $M$ matrix and $G_s = \sqrt{G}$ is also a positive definite $M$ matrix \cite{Alefeld}, and all of its entries are non-zero \cite{Cao}. 
Using \emph{Lemma 5.5} of \cite{Cao}, $G + \sqrt{G + \frac{4}{\alpha}}\sqrt{G}$ is an $M$ matrix with negative off-diagonal entries. Now, $R^{-1}K$ is a positive definite matrix, so its principal submatrix $R^{-1}K_f$ is also positive definite. Thus, all the diagonal entries of $R^{-1}K_f$ are positive proving that $K_f$ is a positive definite $M$ matrix. 

Next we will show that all the entries of $K_{fl}$ are negative. Using the definition of $B$, 
\begin{align*}
Q_{fl} &= (1-\alpha) K_{fl} + \alpha^2 K_f (\alpha K_f + R)^{-1}K_{fl} \\
K_{fl} &= [(1-\alpha)I + \alpha^2 K_f (\alpha K_f + R)^{-1}]^{-1}Q_{fl} \\
&= (1-\alpha)\left(sI - D\right)^{-1}Q_{fl}
\end{align*}
where $s = \left(1+ \frac{\alpha^2}{1-\alpha}\right)$ and $D = \frac{\alpha^2}{1-\alpha}(I+\alpha R^{-1}K_f)^{-1}$. Because $\alpha R^{-1}K_f$ is an $M$ matrix, $D$ is a positive matrix. Furthermore, $\rho((I+\alpha R^{-1}K_f)^{-1}) <1$, which implies that $\rho(D) < s$ and $(1-\alpha)\left(sI - D\right)^{-1}$ is a positive matrix. Since $Q_{fl}$ has all the entries non-positive, $K_{fl}$ has all the entries strictly negative unless an entire column of $Q_{fl}$ is zero which is not possible for a connected graph.

Finally, to show that $K_l$, 
\begin{align*}
Q_l = (1-\alpha) K_l + \alpha^2 K_{fl}\tr (\alpha K_f + R)^{-1}K_{fl} 
\end{align*}
Since $(\alpha K_f + R)^{-1}$ is a positive matrix and all the entries of $K_{fl}$ are negative, $\alpha^2 K_{fl}\tr (\alpha K_f + R)^{-1}K_{fl}$ is a positive matrix. By definition $Q_l$ has positive diagonal entries and non-positive off-diagonal entries. To satisfy the above equation, $K_l$ must have positive diagonal entries and negative off-diagonal entries. This concludes the proof. 
\end{proof}
\end{appendix}

\end{document}